\tikzstyle{vertex}=[circle,draw=black,fill=black,inner sep=0,minimum size=3pt,text=white,font=\footnotesize]
\newtheorem*{thm*}{Theorem}
\newtheorem{thm}{Theorem}
\newtheorem{lemma}[thm]{Lemma}
\newtheorem{proposition}[thm]{Proposition}
\newtheorem{definition}[thm]{Definition}
\newtheorem{corollary}[thm]{Corollary}
\newtheorem*{corollary*}{Corollary}
\newtheorem{clm}[thm]{Claim}
\newtheorem*{proposition*}{Proposition}
\newtheorem{observation}[thm]{Observation}
\newcommand\ex{\ensuremath{\mathrm{ex}}}
\newcommand\cC{{\mathcal C}}
\newcommand\cF{{\mathcal F}}
\newcommand\cH{{\mathcal H}}
\newcommand\cK{{\mathcal K}}
\newcommand\cR{{\mathcal R}}
\newcommand{\ignore}[1]{}
\newcommand*\patchAmsMathEnvironmentForLineno[1]{%
  \expandafter\let\csname old#1\expandafter\endcsname\csname #1\endcsname
  \expandafter\let\csname oldend#1\expandafter\endcsname\csname end#1\endcsname
  \renewenvironment{#1}%
     {\linenomath\csname old#1\endcsname}%
     {\csname oldend#1\endcsname\endlinenomath}}%
\newcommand*\patchBothAmsMathEnvironmentsForLineno[1]{%
  \patchAmsMathEnvironmentForLineno{#1}%
  \patchAmsMathEnvironmentForLineno{#1*}}%
\def\inst#1{$^{#1}$}
\begin{document}

\title{Hypergraph based Berge hypergraphs}

\author{Martin Balko\inst{1}\thanks{The first author was supported by the grant no.~19-04113Y of the Czech Science Foundation (GA\v{C}R) and by the Center for Foundations of Modern Computer Science (Charles University project UNCE/SCI/004).} 
\and
D\'aniel Gerbner\inst{2}\thanks{
The second author was supported in part by the J\'anos Bolyai Research Fellowship of the Hungarian Academy of
Sciences and the National Research, Development and Innovation Office - NKFIH under the grants K
116769, KH 130371 and SNN 12936.}
\and 
Dong Yeap Kang\inst{3}\thanks{The third author was supported by the National Research Foundation of Korea (NRF) grant funded by the
Korea government (MSIT) (No. NRF-2017R1A2B4005020) and also by TJ Park Science Fellowship of POSCO
TJ Park Foundation. }
\and
Younjin Kim\inst{4}\thanks{The fourth author is a corresponding author and was supported by Basic Science Research Program through the National Research Foundation of Korea (NRF) funded by the Ministry of Education (2017R1A6A3A04005963).}
\and
Cory Palmer\inst{5}
}

\maketitle

\begin{center}
{\footnotesize
\inst{1} 
Department of Applied Mathematics, \\
Faculty of Mathematics and Physics, Charles University, Czech Republic \\
\texttt{balko@kam.mff.cuni.cz}
\\\ \\
\inst{2} 
Alfr\'{e}d R\'{e}nyi Institute of Mathematics, Hungarian Academy of Sciences, Budapest, Hungary \\
\texttt{gerbner@renyi.hu}
\\\ \\
\inst{3} 
Department of Mathematical Sciences, KAIST, South Korea \\
\texttt{dyk90@kaist.ac.kr}
\\\ \\
\inst{4} 
Institute of Mathematical Sciences, Ewha Womans University, Seoul, South Korea \\
\texttt{younjinkim@ewha.ac.kr}
\\\ \\
\inst{5} 
University of Montana, Missoula, Montana 59812, USA \\
\texttt{cory.palmer@umontana.edu}
}
\end{center}

\begin{abstract}
Fix a hypergraph $\mathcal{F}$. A hypergraph $\mathcal{H}$ is called a {\it Berge copy of $\mathcal{F}$} or {\it Berge-$\mathcal{F}$} if we can choose a subset of each hyperedge of $\mathcal{H}$ to obtain a copy of $\mathcal{F}$.  A hypergraph $\mathcal{H}$ is {\it Berge-$\mathcal{F}$-free} if it does not contain a subhypergraph which is Berge copy of $\mathcal{F}$. This is a generalization of the usual, graph based Berge hypergraphs, where $\cF$ is a graph. 

In this paper, we study extremal properties of hypergraph based Berge hypergraphs and generalize several results from the graph based setting. In particular, we show that for any $r$-uniform hypregraph $\mathcal{F}$, the sum of the sizes of the hyperedges of a (not necessarily uniform) Berge-$\mathcal{F}$-free hypergraph $\mathcal{H}$ on $n$ vertices is $o(n^r)$ when all the hyperedges of $\mathcal{H}$ are large enough. We also give a connection between hypergraph based Berge hypergraphs and generalized hypergraph Tur\'an problems.
\end{abstract}

\section{Introduction}

Berge \cite{berge} defined hypergraph cycles in the following way. A cycle of length $k$ corresponds to $k$ vertices $v_1,\dots, v_k$ and $k$ distinct hyperedges $h_1,\dots, h_k$ such that $v_i$ and $v_{i+1}$ are both contained in $h_i$ for $i<k$, while $h_k$ contains both $v_k$ and $v_1$. Observe that for 2-uniform hypergraphs these are the usual graph cycles, but in case of larger hyperedges there are several non-isomorphic cycles of length $k$. Note that there are several other definitions of hypergraph cycles; these ones are usually referred to as \textit{Berge cycles}.

Berge did not study extremal properties of these cycles. Lazebnik and Verstra{\"e}te \cite{lazver} were the first to consider such problems. They gave bounds on the number of hyperedges in an $r$-uniform hypergraph without a Berge cycle of length at most 4. Many similar results followed  for Berge cycles (see, for example, \cite{gyori,gyl}) and Berge paths \cite{gykl}, where a \emph{Berge path} is obtained from a Berge cycle by removing a hyperedge (similarly for the graph case).

Gerbner and Palmer \cite{gp1} observed that the way we obtain a Berge cycle from a graph cycle $C_k$ or a Berge path from a path $P_k$ can be generalized to any graph $G$. 

\begin{definition} A hypergraph $\mathcal{H}$ is a \emph{Berge copy of a graph $G$} (in short: \emph{Berge-$G$}) if $V(G)\subseteq V(\mathcal{H})$ and there is a bijection $f:E(G)\rightarrow E(\mathcal{H})$ such that for any $e\in E(G)$ we have $e\subseteq f(e)$.
\end{definition}

In other words, $\mathcal{H}$ is Berge-$G$ if we can choose a pair of vertices (i.e., a graph edge) in each hyperedge of $\mathcal{H}$ to obtain a copy of $G$. The maximum number (or weight) of hyperedges in Berge-$G$-free hypergraphs has been studied by a number of authers (see, for example, \cite{sgmp,gmp,gmv,pttw,t}, and Subsection 5.2.2. of \cite{gp} for a short survey). Others have investigated saturation problems \cite{linsat,egkms}, Ramsey problems \cite{agy,gmov,STZZ} or spectral radius \cite{kllw} for Berge hypergraphs.

It has been observed in several different settings \cite{anssal,linsat,gp,STZZ} that one can analogously define Berge copies of a hypergraph. More precisely we have the following definition.

\begin{definition} Fix a hypergraph $\mathcal{F}$. A hypergraph $\mathcal{H}$ is a \emph{Berge copy of $\mathcal{F}$} (in short: \emph{Berge-$\mathcal{F}$}) if $V(\mathcal{F})\subseteq V(\mathcal{H})$ and there is a bijection $f:E(\mathcal{F})\rightarrow E(\mathcal{H})$ such that for any $h\in E(\mathcal{F})$ we have $h\subseteq f(h)$. In the case when $\mathcal{H}$ is $k$-uniform, we say $\mathcal{H}$ is \emph{Berge$_k$-$\mathcal{F}$}.

Fix a family of hypergraphs $\cC$. A hypergraph $\mathcal{H}$ is a \emph{Berge copy of $\cC$} (in short: \emph{Berge-$\cC$}) if it is a Berge copy of some member $\mathcal{F}\in \cC$. In the case when $\mathcal{H}$ is $k$-uniform, we say $\mathcal{H}$ is \emph{Berge$_k$-$\cC$}.
\end{definition}

We call such hypergraphs \textit{hypergraph based Berge hypergraphs}. So far there has been no systematic study of their properties.
In this paper we focus on the maximum number (or sum of weights) of edges in hypergraphs avoiding Berge copies of a given hypergraph $\mathcal{F}$. Note that the Berge-$\mathcal{F}$ and Berge$_k$-$\mathcal{F}$ are defined even if $\mathcal{F}$ is not $r$-uniform for some $r \leq k$. However, here we will only deal with the case when $\mathcal{F}$ is $r$-uniform.

As we often consider several hypergraphs simultaneously, for the sake of brevity we will use the term \emph{$r$-graph} in place of $r$-uniform hypergraph. The term \emph{$r$-edge} refers to a hyperedge of an $r$-graph.
Given a family $\cC$ of $r$-graphs, we denote by $\ex_r(n,\cC)$ the maximum possible number of hyperedges in an $n$-vertex $r$-graph that does not contain any member of $\cC$ as a sub-hypergraph. Thus, when $\cC$ is the family Berge-$\mathcal{F}$ for some hypergraph $\mathcal{F}$, we use the notation $\ex_r(n,\textrm{Berge-}\mathcal{F})$. 

\subsection{Graph based Berge hypergraphs}

In this subsection we state several known results for the ordinary, graph based Berge hypergraphs, that we will generalize. First we deal with Berge-$F$-free hypergraphs that are not necessarily uniform, where $F$ is a graph. Observe that replacing a hyperedge with a larger hyperedge cannot destroy a copy of Berge-$F$, thus in order to maximize hyperedges in a Berge-$F$-free hypergraph it is preferable to use small hyperedges. Instead, we will assign a weight to each hyperedge that depends on the size of the hyperedge. Such problems were studied by Gy\H ori \cite{gyori} for triangles, by Gy\H ori and Lemons \cite{gyl} for cycles, and by Gerbner and Palmer \cite{gp1}  for arbitrary graphs. In these papers the weight of a hyperedge $h$ is $|h|$ or $|h|-c$ for some constant $c$.
Recently English, Gerbner, Methuku and Palmer \cite{sgmp} considered more general weight functions.

Before stating their results, we state a result of Gr\'osz, Methuku, and Tompkins~\cite{GMT} that deals with $k$-uniform Berge-$F$-free hypergraphs (where $F$ is a graph), for $k$ large enough.
Let us denote by $R^{(r)}(\cF,\cK)$ the two-color $r$-uniform Ramsey number of $r$-graphs $\cF$ and $\cK$. When $r=2$, put $R(F,K) = R^{(2)}(F,K)$. 

\begin{thm}[Gr\'osz, Methuku, Tompkins~\cite{GMT}]\label{grmeto}
Let $F$ be a graph and let $\mathcal{H}$ be a $k$-uniform Berge-$F$-free hypergraph. If $k \geq R(F,F)$, then 
\[
|E(\mathcal{H})| = o(n^2).
\]
\end{thm}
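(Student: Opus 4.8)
The plan is to argue by contradiction. Fix $\varepsilon>0$ and suppose $|E(\mathcal{H})|\ge\varepsilon n^2$ for arbitrarily large $n$; I want to locate a Berge-$F$ in $\mathcal{H}$. Write $f=|E(F)|$ and $v=|V(F)|$, and note $k\ge R(F,F)\ge v$. First I would clean: repeatedly delete any vertex contained in fewer than $\tfrac{\varepsilon}{2}n$ hyperedges. At most $\tfrac{\varepsilon}{2}n^2$ hyperedges disappear in total, so the resulting subhypergraph still has at least $\tfrac{\varepsilon}{2}n^2$ hyperedges, every one of its vertices lies in at least $\tfrac{\varepsilon}{2}n$ hyperedges, and its number of vertices tends to infinity. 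Hence we may assume $\mathcal{H}$ itself has $n$ vertices (with $n$ large) and minimum vertex-degree at least $\delta n$, where $\delta=\varepsilon/2$.

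The engine is a Ramsey argument inside one hyperedge. Call a pair of vertices \emph{heavy} if it lies in at least $f$ hyperedges of $\mathcal{H}$, and \emph{light} otherwise. Fix any hyperedge $h$; since $|h|=k\ge R(F,F)$, the $2$-colouring of the pairs inside $h$ by heavy/light yields a monochromatic copy of $F$ on $v$ of the vertices of $h$. If this copy is \emph{heavy}, then each of its $f$ edges lies in at least $f$ hyperedges, so going through the edges in some order and assigning to the $j$-th edge a hyperedge that contains it and has not yet been used (possible since the edge lies in at least $f$ hyperedges and at most $j-1<f$ are used) yields a system of distinct representatives, i.e.\ a Berge-$F$ --- a contradiction. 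Hence every hyperedge of $\mathcal{H}$ contains a \emph{light} copy of $F$. Counting in two ways the incidences between hyperedges and the edges of their chosen light copies, the left-hand side is $f\,|E(\mathcal{H})|$ while each light pair is counted fewer than $f$ times, so $|E(\mathcal{H})|$ is at most the number of light pairs, and in particular $|E(\mathcal{H})|\le\binom{n}{2}$.

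At this point we only have the bound $\ex_k(n,\textrm{Berge-}F)=O(n^2)$, and the real obstacle --- the part I expect to require the most care --- is to amplify this to $o(n^2)$, i.e.\ to exclude $|E(\mathcal{H})|\ge\varepsilon n^2$ for \emph{every} fixed $\varepsilon>0$. The plan is a density-increment iteration layered on the Ramsey step: the light pairs form a graph $L$ with $e(L)\ge|E(\mathcal{H})|\ge\varepsilon n^2$, each of whose edges lies in at most $f-1$ hyperedges of $\mathcal{H}$, and I would split on whether the shadow of $\mathcal{H}$ is dense or sparse. In the dense regime $L$ is dense, so by supersaturation (when $F$ is bipartite) or by the Erd\H{o}s--Stone threshold together with supersaturation (when $\chi(F)\ge 3$) the graph $L$ carries $\Omega(n^{v})$ copies of $F$, each lying in the shadow of $\mathcal{H}$; one then shows that only a negligible fraction of them can violate Hall's condition --- a copy fails only if some set $A$ of its edges is jointly covered by at most $|A|-1$ hyperedges, and, since every edge of $L$ is light, such configurations can be counted and bounded by $o(n^{v})$ --- so a surviving copy yields a Berge-$F$. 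In the sparse regime the incidence count forces a positive density of pairs to lie in super-linearly many hyperedges, and those super-heavy pairs are fed back into the Ramsey step. Making the two regimes airtight, and in particular carrying out the count of Hall-deficient copies of $F$ inside $L$ using that all its edges are light, is the technical heart; throughout, the hypothesis $k\ge R(F,F)$ is what guarantees the monochromatic copy of $F$ in the Ramsey step and hence the reduction to the light graph.
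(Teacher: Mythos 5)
Your first half --- the Ramsey step inside a single hyperedge, the heavy/light dichotomy with threshold $|E(F)|$, and the greedy construction of a system of distinct representatives from a heavy copy of $F$ --- is correct and is exactly the paper's Claim~\ref{claim1} specialized to $r=2$; it yields $|E(\mathcal{H})|=O(n^2)$. The problem is the amplification to $o(n^2)$, which is the actual content of the theorem, and your proposed route there does not work. In the ``dense regime'' you invoke supersaturation to extract $\Omega(n^{|V(F)|})$ copies of $F$ from the light graph $L$, but supersaturation only applies above the Tur\'an threshold: if $\chi(F)\ge 3$ and $\varepsilon < 1-\tfrac{1}{\chi(F)-1}$, a graph with $\varepsilon n^2$ edges may contain \emph{no} copy of $F$ at all (e.g.\ $F=K_3$ and $L$ bipartite), so this regime simply does not cover small $\varepsilon$ --- which is the whole point of an $o(n^2)$ statement. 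In the ``sparse regime,'' the incidence identity $\binom{k}{2}|E(\mathcal{H})|=\sum_{\text{pairs}}\deg(\text{pair})$ with $|E(\mathcal{H})|=\varepsilon n^2$ and a shadow of size $\Theta(n^2)$ only forces \emph{constant} average codegree, not super-linear codegrees, so there is nothing to feed back into the Ramsey step and no density increment. Finally, the hope that some elementary iteration closes the gap is structurally doomed: as the paper notes, the Alon--Shapira (Ruzsa--Szemer\'edi type) construction gives Berge-$\mathcal{F}$-free $k$-graphs with $n^{2-o(1)}$ hyperedges, so no power saving is possible and the statement is of removal-lemma strength.

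The missing ingredient is precisely the (hyper)graph removal lemma, as in the paper's proof of the second part of Theorem~\ref{thm1}. The steps, specialized to $r=2$, are: (i) every copy of $F$ in the shadow $\Gamma^{(2)}(\mathcal{H})$ must have two edges in a common hyperedge (else assigning each edge any containing hyperedge already gives a Berge-$F$), and combined with the $O(n^2)$ bound this shows the shadow contains only $o(n^{|V(F)|})$ copies of $F$; (ii) the removal lemma then produces a set $\mathcal{R}$ of $o(n^2)$ pairs meeting every copy of $F$ in the shadow; (iii) a \emph{second} Ramsey argument inside each hyperedge, two-coloring pairs by membership in $\mathcal{R}$, shows the monochromatic copy of $F$ guaranteed by $k\ge R(F,F)$ must lie inside $\mathcal{R}$ and must contain a light pair, so every hyperedge contains a light pair of $\mathcal{R}$ and hence $|E(\mathcal{H})|\le\bigl(|E(F)|-1\bigr)\,|\mathcal{R}|=o(n^2)$. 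Your Hall-deficiency count in step (i) is essentially right, but without (ii) you cannot pass from ``few copies of $F$'' to ``few hyperedges.''
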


Using ideas from Gr\'osz, Methuku, and Tompkins~\cite{GMT},  English, Gerbner, Methuku, and Palmer~\cite{sgmp} showed that for any fixed graph $F$ the sum of the sizes of the hyperedges of a Berge-$F$-free hypergraph on $n$ vertices is $o(n^2)$ when all the hyperedges are large enough. 
This follows from the two results below.

\begin{thm}[English, Gerbner, Methuku, Palmer~\cite{sgmp}] \label{EGMP}
Let $F$ be a fixed graph and let $\mathcal{H}$ be a Berge-$F$-free hypergraph on $n$ vertices.
If every hyperedge  of $\mathcal{H}$ has size at least $|V(F)|$, then
\[
\sum_{h\in E(\mathcal{H})} |h|^2 = O(n^2).
\]
Furthermore, if every hyperedge of $\mathcal{H}$ has size at least $R(F,F)$ and at most $o(n)$, then 
\[\sum_{h\in E(\mathcal{H})} |h|^2 = o(n^2).
\]
\end{thm}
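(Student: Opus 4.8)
The plan is to reduce both assertions to a bound on $\sum_{h\in E(\mathcal H)}\binom{|h|}{2}$ and then to exploit that a pair of vertices lying in many hyperedges is extremely flexible when one assembles a Berge copy of $F$. Write $d_{\mathcal H}(x,y)$ for the number of hyperedges of $\mathcal H$ containing both $x$ and $y$. From $|h|^{2}=2\binom{|h|}{2}+|h|$ and $|h|\le |h|^{2}/|V(F)|$ (since $|h|\ge |V(F)|$), one gets $\sum_{h}|h|^{2}\le\frac{2|V(F)|}{|V(F)|-1}\sum_{h}\binom{|h|}{2}=\frac{2|V(F)|}{|V(F)|-1}\sum_{\{x,y\}}d_{\mathcal H}(x,y)$, so it suffices to show $\sum_{\{x,y\}}d_{\mathcal H}(x,y)=O(n^{2})$ in the first case and $o(n^{2})$ in the second. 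Put $v=|V(F)|$, $f=|E(F)|$ (the cases $f\le 1$ being degenerate and treated by hand, so assume $f\ge 2$), and let $\mathcal F^{-}=\{F-e:e\in E(F)\}$, a finite family of graphs on $v$ vertices each having an edge, so $\chi^{-}:=\min\{\chi(G):G\in\mathcal F^{-}\}\ge 2$.

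The pivotal observation I would isolate is: \emph{for every hyperedge $h$, the graph $H_{h}$ on vertex set $h$ whose edges are the pairs $\{x,y\}\subseteq h$ with $d_{\mathcal H}(x,y)\ge f+1$ contains no member of $\mathcal F^{-}$.} Indeed, if $H_{h}$ contained a copy of some $F-e$ — which, since $|h|\ge v$, sits on $v$ vertices of $h$ — then its $f-1$ edges each have codegree $\ge f+1$ in $\mathcal H$, so one greedily picks $f-1$ distinct hyperedges for them, all distinct from $h$, and uses $h$ itself for the single missing edge $e$ (both endpoints of which lie among those $v$ vertices of $h$); the resulting bijection is a Berge-$F$ in $\mathcal H$, a contradiction. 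Now split $\sum_{\{x,y\}}d_{\mathcal H}(x,y)$ into the part from pairs of codegree $\le f$, which is at most $f\binom{n}{2}$, and the part from pairs of codegree $\ge f+1$, which equals $\sum_{h}|E(H_{h})|$. By the observation and the Erd\H os--Stone--Simonovits theorem there are $\delta=\delta(F)>0$ and $k_{1}=k_{1}(F)$ with $|E(H_{h})|\le\ex(|h|,\mathcal F^{-})\le(1-\delta)\binom{|h|}{2}$ whenever $|h|\ge k_{1}$; and for the boundedly many sizes $v\le k<k_{1}$, applying the same observation inside the $k$-uniform layer $\mathcal H_{k}$ and using $\binom{k}{2}-\ex(k,\mathcal F^{-})\ge 1$ (as $K_{k}$ contains every $v$-vertex graph) gives $|E(\mathcal H_{k})|\le f\binom{n}{2}$, so those layers contribute only $O_{F}(n^{2})$ in total. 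Hence
\[
\sum_{h}\binom{|h|}{2}\le f\binom{n}{2}+O_{F}(n^{2})+(1-\delta)\sum_{h}\binom{|h|}{2},
\]
and absorbing the last term yields $\sum_{h}\binom{|h|}{2}=O_{F}(n^{2})$, proving the first assertion.

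For the second assertion I would also use $R(F,F)\le|h|\le o(n)$. By Theorem~\ref{grmeto} applied layer by layer, hyperedges of size below any fixed constant contribute $o(n^{2})$; for the rest, the argument above already controls $\sum_{h}|E(H_{h})|$ by a constant factor less than $\sum_{h}\binom{|h|}{2}$, so it remains only to improve the crude bound $f\binom{n}{2}$ on the contribution of the low-codegree pairs to $o(n^{2})$. Here Ramsey's theorem enters: inside each hyperedge $h$ of size $\ge R(F,F)$, two-colour its pairs according to whether their codegree in $\mathcal H$ is $\ge f+1$ or not and extract a monochromatic copy of $F$; a heavy such copy would yield a Berge-$F$ as above, so $h$ always carries a copy of $F$ among its low-codegree pairs, hence (by supersaturation, once $|h|$ is large) many low-codegree pairs. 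One then feeds this abundance of low-codegree structure into a Gr\'osz--Methuku--Tompkins-type argument: if $\sum_{h}\binom{|h|}{2}$ were at least $\varepsilon n^{2}$, then after discarding a negligible part one could still locate $f$ hyperedges and $v$ vertices fitting together as a Berge-$F$, a contradiction. The hypothesis $|h|=o(n)$ is used precisely to ensure that the pigeonholing over the growing hyperedge sizes and the passage to a sufficiently large subconfiguration cost only $o(n^{2})$.

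The first assertion is essentially routine once the $\mathcal F^{-}$-freeness observation is in hand. The main obstacle is the second assertion: running the Gr\'osz--Methuku--Tompkins supersaturation argument natively in the non-uniform setting, where hyperedges may have many sizes all tending to infinity, so no single uniform layer is dense enough to invoke Theorem~\ref{grmeto} directly. Making the Hall-type matching of the $f$ edges of $F$ to $f$ distinct hyperedges go through uniformly across all these scales — with the low-codegree density supplied by Ramsey playing the role of the dense host — is the delicate point.
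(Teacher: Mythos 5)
Your first assertion is proved correctly, and by essentially the same mechanism as the paper uses (for the $r=2$ case of its Theorem~\ref{thm1}): classify pairs by codegree relative to $|E(F)|$, observe that the high-codegree pairs inside any hyperedge $h$ span a graph with no copy of a forbidden structure (you use $\{F-e\}$-freeness plus Erd\H{o}s--Stone; the paper uses $F$-freeness, hence $K_{|V(F)|}$-freeness, plus Tur\'an/de Caen --- the latter is slightly cleaner since it gives a uniform $\alpha<1$ for all $|h|\ge|V(F)|$ and spares you the separate treatment of the layers $|h|<k_1$), then absorb. That part is fine.

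The second assertion, however, has a genuine gap, and it sits exactly where you flag ``the delicate point.'' Your Ramsey argument correctly shows that every $R(F,F)$-subset of every hyperedge contains a low-codegree pair, hence that a constant fraction of the $\binom{|h|}{2}$ pairs of each $h$ are low-codegree; but summing that over $h$ and using that each low-codegree pair is counted at most $|E(F)|$ times only re-derives $\sum_h\binom{|h|}{2}=O(n^2)$. Nothing in your sketch converts this into $o(n^2)$: the step ``if $\sum_h\binom{|h|}{2}\ge\varepsilon n^2$ then one could still locate $f$ hyperedges and $v$ vertices fitting together as a Berge-$F$'' is asserted, not proved. The missing idea is the graph removal lemma. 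One first shows that the number of copies of $F$ in the $2$-shadow of $\mathcal{H}$ is $o(n^{|V(F)|})$: any such copy must have two of its edges inside a common hyperedge $h$ (else a system of distinct representatives gives a Berge-$F$), so the copies can be counted by choosing $h$, then $3$ vertices of $h$, then the remaining $|V(F)|-3$ vertices; this is where both the hypothesis $|h|=o(n)$ and your already-proved bound $\sum_h|h|^2=O(n^2)$ enter. The removal lemma then yields a set $\mathcal{R}$ of $o(n^2)$ pairs meeting every copy of $F$ in the shadow. Rerunning your Ramsey two-colouring with the colour classes ``in $\mathcal{R}$'' versus ``not in $\mathcal{R}$'' shows that every $R(F,F)$-subset of every hyperedge contains a pair that is both in $\mathcal{R}$ and of low codegree; hence a constant fraction of $\binom{|h|}{2}$ consists of such pairs, and summing against $|\mathcal{R}|\cdot(|E(F)|-1)=o(n^2)$ finishes the proof. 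Without the removal lemma (or an equivalent sparsification of the low-codegree pairs), the supersaturation you invoke has no small set to be compared against, and the argument does not close.
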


\begin{corollary}
[English, Gerbner, Methuku, Palmer~\cite{sgmp}] \label{EGMP2}
Let $F$ be a fixed graph and let $\mathcal{H}$ be a Berge-$F$-free hypergraph on $n$ vertices.
Let  $w:\mathbb{Z}_+\rightarrow \mathbb{Z}_+$ be a weight function such that $w(m)=o(m^2)$.
If every hyperedge  of $\mathcal{H}$ has size at least $R(F,F)$, then 
\[
\sum_{h\in E(\mathcal{H})} w(|h|) = o(n^2).
\]
\end{corollary}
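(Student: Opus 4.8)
The plan is to split the hyperedges of $\mathcal{H}$ according to their size at a threshold $M=M(\epsilon)$ and treat the two ranges with the two parts of Theorem~\ref{EGMP}. Fix $\epsilon>0$. Since $w(m)=o(m^2)$, there is a constant $M$ (depending only on $\epsilon$ and $w$) with $w(m)\le \delta m^2$ for every $m\ge M$, where $\delta:=\epsilon/(2C)$ and $C>0$ is the implied constant in the first conclusion of Theorem~\ref{EGMP}; this conclusion applies to $\mathcal{H}$ because $R(F,F)\ge |V(F)|$, so every hyperedge of $\mathcal{H}$ has size at least $|V(F)|$, and it gives $\sum_{h\in E(\mathcal{H})}|h|^2\le C n^2$ for all large $n$.

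For the \emph{large} hyperedges, those $h$ with $|h|\ge M$, we simply bound
\[
\sum_{h\in E(\mathcal{H}),\,|h|\ge M} w(|h|)\;\le\;\delta\sum_{h\in E(\mathcal{H})}|h|^2\;\le\;\delta C n^2\;=\;\tfrac{\epsilon}{2}n^2
\]
for all sufficiently large $n$. For the \emph{small} hyperedges, those $h$ with $R(F,F)\le |h|<M$, let $\mathcal{H}'$ be the sub-hypergraph of $\mathcal{H}$ consisting of exactly these hyperedges. It is again Berge-$F$-free, every hyperedge has size at least $R(F,F)$, and every hyperedge has size at most the constant $M-1$, which is $o(n)$. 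Hence the ``furthermore'' part of Theorem~\ref{EGMP} applies to $\mathcal{H}'$ and yields $\sum_{h\in E(\mathcal{H}')}|h|^2=o(n^2)$; in particular $|E(\mathcal{H}')|\le \sum_{h\in E(\mathcal{H}')}|h|^2=o(n^2)$. Since each such hyperedge contributes at most the constant $W:=\max\{\,w(m):R(F,F)\le m<M\,\}$, we obtain $\sum_{h\in E(\mathcal{H}')}w(|h|)\le W\cdot|E(\mathcal{H}')|=o(n^2)\le \tfrac{\epsilon}{2}n^2$ for $n$ large. Adding the two estimates gives $\sum_{h\in E(\mathcal{H})}w(|h|)\le \epsilon n^2$ for all large $n$, and as $\epsilon>0$ was arbitrary, $\sum_{h\in E(\mathcal{H})}w(|h|)=o(n^2)$.

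I do not expect a serious obstacle here, since the statement is a fairly direct consequence of Theorem~\ref{EGMP}. The only point that genuinely matters — and that forces the hypothesis $|h|\ge R(F,F)$ rather than merely $|h|\ge |V(F)|$ — is the handling of the bounded-size hyperedges: the first part of Theorem~\ref{EGMP} alone gives only $|\{h:|h|<M\}|=O(n^2)$, which after multiplying by the constant $W$ leaves an $O(n^2)$ (not $o(n^2)$) term; it is precisely the $o(n^2)$ part of Theorem~\ref{EGMP} (equivalently, Theorem~\ref{grmeto} applied to each of the fewer than $M$ uniformities in the range $[R(F,F),M)$ and summed) that pushes this contribution down to $o(n^2)$. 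Beyond that, one should just record the trivial inequality $R(F,F)\ge |V(F)|$ and the obvious fact that a sub-hypergraph of a Berge-$F$-free hypergraph is Berge-$F$-free.
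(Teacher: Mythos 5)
Your proof is correct and follows essentially the same route as the paper's proof of the hypergraph analogue (Corollary~\ref{cor1}): split the hyperedges by size, handle the bounded ones with the $o(n^2)$ (``furthermore'') clause of Theorem~\ref{EGMP} and the large ones with the $O(n^2)$ clause combined with $w(m)=o(m^2)$. The only cosmetic difference is that the paper cuts at $\sqrt{n}$ rather than at a constant threshold $M(\epsilon)$.
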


For uniform Berge-$F$-free hypergraphs, 
Gerbner and Palmer \cite{gp2} established a connection to generalized Tur\'an problems.
Given two graphs $H$ and $F$, let $\ex(n,H,F)$ denote the maximum number of copies of $H$ in an $F$-free graph on $n$ vertices. More formally, if $N(H,G)$ denotes the number of subgraphs of $G$ that are isomorphic to $H$, then 
\[
\ex(n,H,F)=\max\{N(H,G):\, G\, \text{is an $F$-free graph on $n$ vertices}\}.
\]

When $H=K_2$, this is the ordinary Tur\'an function $\ex(n,F)$. The first result concerning other graphs $H$ was the exact determination of $\ex(n,K_k,K_r)$ for each $k$ and $r$ by Zykov \cite{zykov}. Several more sporadic results followed it, and the systematic study of $\ex(n,H,F)$ was initiated by Alon and Shikhelman \cite{as}. The area is sometimes referred to as \textit{generalized Tur\'an problems}.

A connection between (graph based) Berge hypergraphs and generalized Tur\'an problems was obtained by Gerbner and Palmer \cite{gp2}, who proved that for any graph $F$, any $r$, and any $n$, we have 
\[
\ex(n,K_r,F)\le \ex_r(n,\textup{Berge-}F)\le \ex(n,K_r,F)+\ex(n,F).
\]
This was later generalized by Gerbner, Methuku, and Palmer \cite{gmp}. To state their result, we need to introduce some definitions.
We say that a graph $G$ is \emph{red-blue} if each of its edges is colored with one of the colors red and blue. Then for a red-blue graph $G$, we denote by $G_{red}$ the subgraph spanned by the red edges and $G_{blue}$ the subgraph spanned by the blue edges. Put $g_r(G)=|E(G_{red})|+N(K_r,G_{blue})$.

\begin{lemma}[Gerbner, Methuku, Palmer \cite{gmp}]\label{celeb2} For any graph $F$ and integers $r,n$ there is a red-blue $F$-free graph $G$ on $n$ vertices, such that $\ex_r(n,\textup{Berge-}F)\le g_r(G)$.
\end{lemma}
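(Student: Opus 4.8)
The plan is to take an extremal Berge-$F$-free $r$-graph and to ``project'' it, one hyperedge at a time, onto a red-blue graph, turning each hyperedge either into one of its pairs (a red edge) or into a blue $K_r$ on its vertex set, while never creating a Berge-$F$.

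The first step is to translate the conclusion into hypergraph language. To a red-blue graph $G$ on a vertex set $V$ associate the hypergraph $\cG$ on $V$ whose edges are all the red edges of $G$ (viewed as $2$-element sets) together with the vertex sets of all copies of $K_r$ in $G_{blue}$; distinct copies of $K_r$ have distinct vertex sets, and (for $r\ge 3$, the case $r=2$ being immediate) a $2$-set is never an $r$-set, so $|E(\cG)|=|E(G_{red})|+N(K_r,G_{blue})=g_r(G)$, and, directly from the definitions, $G$ is red-blue $F$-free precisely when $\cG$ is Berge-$F$-free. Thus it suffices to produce a red-blue graph $G$ on $n$ vertices with $\cG$ Berge-$F$-free together with an injection $\phi\colon E(\cH)\to E(\cG)$ satisfying $\phi(h)\subseteq h$ for every $h\in E(\cH)$, where $\cH$ is a fixed Berge-$F$-free $r$-graph on $n$ vertices with $|E(\cH)|=\ex_r(n,\textup{Berge-}F)$; indeed then $\ex_r(n,\textup{Berge-}F)=|E(\cH)|\le|E(\cG)|=g_r(G)$.

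To build $G$ and $\phi$, process the hyperedges $h_1,\dots,h_m$ of $\cH$ in any order, maintaining a red-blue graph $G$ (initially edgeless), its associated hypergraph $\cG$, and a partial injection $\phi$ from the processed hyperedges into $E(\cG)$ with $\phi(h)\subseteq h$, keeping $\cG$ Berge-$F$-free at all times. When $h_i$ is reached, first try a \emph{red move}: if there is a pair $p\subseteq h_i$ with $p\notin E(G)$ such that adding $p$ as a red edge leaves $\cG$ Berge-$F$-free, do so and set $\phi(h_i):=p$. If no such pair exists, perform a \emph{blue move}: colour every pair inside $h_i$ blue and set $\phi(h_i):=V(h_i)$, now an $r$-edge of $\cG$. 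Either way a brand-new element of $E(\cG)$ is created and used as $\phi(h_i)$, so $\phi$ stays injective.

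The heart of the proof — and the step I expect to be the main obstacle — is to guarantee that whenever the red move fails, the blue move is both available and harmless. For availability one must show that if every pair $p\subseteq h_i$ is blocked (already an edge of $G$, or else completing it to a red edge would create a Berge-$F$ in $\cG$), then in fact all $\binom{r}{2}$ pairs inside $h_i$ already lie in $G_{blue}$, so $V(h_i)$ yields a genuinely new blue $K_r$; and for harmlessness one must show that colouring $\binom{V(h_i)}{2}$ blue — hence adding $V(h_i)$, and possibly further cliques, as new hyperedges of $\cG$ — does not create a Berge-$F$. Both points should follow from the Berge-$F$-freeness of $\cH$ itself through a substitution trick: a pair $p\subseteq h_i$ that is blocked because adding it red would create a Berge-$F$ comes with a bijection from $E(F)$ onto distinct hyperedges of $\cG$, one of which is $p$; pulling each of those hyperedges back along $\phi$ to the $\cH$-hyperedge that produced it, and using $\phi(h_j)\subseteq h_j$ together with $p\subseteq h_i$, produces a bijection from $E(F)$ onto distinct hyperedges of $\cH$ witnessing a Berge-$F$ in $\cH$, a contradiction. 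Making this rigorous requires careful bookkeeping for the pairs of $h_i$ that are blocked merely because they already occur in $G$ (and checking they cannot occur red), and for the extra copies of $K_r$ the blue move might create; controlling this interaction between red edges, blue edges, and pre-existing blue cliques is the technical core. Once it is settled, after all $m$ hyperedges have been processed $\phi$ is an injection $E(\cH)\to E(\cG)$ and $G$ is a red-blue $F$-free graph on $n$ vertices with $g_r(G)=|E(\cG)|\ge|E(\cH)|=\ex_r(n,\textup{Berge-}F)$, which is the claim.
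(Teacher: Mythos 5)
Your reduction of the lemma to ``$\cG$ is Berge-$F$-free'' is where the argument breaks. The lemma asserts that $G$ itself is $F$-free as a graph, and this is strictly stronger than Berge-$F$-freeness of your auxiliary hypergraph $\cG$: a copy of $F$ in $G$ yields a Berge-$F$ in $\cG$ only if its blue edges can be matched to \emph{distinct} blue copies of $K_r$ containing them, which need not be possible. Your construction makes this failure unavoidable, because every blue move plants an entire blue $K_r$ on $V(h_i)$ inside $G$; so whenever $F$ is a subgraph of $K_r$ (say $F=K_3$ and $r\ge 3$) a single forced blue move already puts a copy of $F$ into $G$, even though $\cG$ may well remain Berge-$F$-free. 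Since the whole point of the lemma in applications is to bound $|E(G_{red})|\le \ex(n,F)$ and $N(K_r,G_{blue})\le \ex(n,K_r,F)$ using the $F$-freeness of $G$, this is not a cosmetic issue. Two further problems sit inside the part you defer as ``bookkeeping'': (i) blue edges coming from different blue moves can assemble into blue cliques that are hyperedges of $\cG$ but lie outside the image of $\phi$, and for those your pullback to $\cH$ is simply undefined, so the Berge-$F$-freeness invariant cannot be propagated; (ii) a blue move must recolor pairs of $h_i$ that are already red, which deletes $2$-edges of $\cG$ that may equal $\phi(h_j)$ for earlier $j$ and destroys the injection you are maintaining.

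The proof in the source the paper follows (and which the paper reproduces for the hypergraph analogue, Lemma~\ref{celebhyp}) avoids all of this by working globally rather than greedily: form the bipartite incidence graph between the hyperedges of an extremal Berge-$F$-free $r$-graph and the pairs in their $2$-shadow, take a \emph{maximum matching} $M$, and apply the K\H{o}nig-type partition of Lemma~\ref{cel2}. The edge set of $G$ is then exactly the set $B'$ of matched pairs, so every single edge of $G$ --- red or blue --- owns a private hyperedge containing it; any copy of $F$ in $G$ therefore lifts directly to a Berge-$F$ in the extremal hypergraph, giving $F$-freeness of $G$ for free, and the count $|A_1|+|A_2|\le |B_1|+N(K_r,G_{blue})$ follows because every unmatched-side hyperedge in $A_2$ has all of its pairs in $B_2$ and hence spans a blue $K_r$. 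If you want to salvage your approach, you would essentially have to re-introduce this system of distinct representatives, at which point you have rediscovered the matching argument.
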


This lemma was used to prove several new bounds on $\ex_r(n,\textrm{Berge-}F)$ for various graphs $F$. Note that an essentially equivalent version was obtained by F\"uredi, Kostochka, and Luo~\cite{fkl}.

\section{Hypergraph based Berge hypergraphs}

The main results of this paper are hypergraph analogues of Theorem~\ref{EGMP}, Corollary~\ref{EGMP2}, and Lemma~\ref{celeb2}.

\begin{thm}\label{thm1}
Let $\cF$ be an $r$-graph with $r \geq 2$ and let $\cH$ be a 
Berge-$\cF$-free hypergraph on $n$ vertices. If  every hyperedge of $\mathcal{H}$ has size at least $|V(\cF)|$, then 
\[
\sum_{h\in E(\mathcal{H})} |h|^r = O(n^r).
\]
Furthermore, if every hyperedge of $\mathcal{H}$ has size at least $R^{(r)}(\cF,\cF)$ and at most $o(n)$, then 
\[
\sum_{h\in E(\mathcal{H})} |h|^r = o(n^r).
\]
\end{thm}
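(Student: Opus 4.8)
The plan is to adapt the double-counting / Ramsey argument behind Theorems~\ref{grmeto} and \ref{EGMP} to the $r$-uniform setting, the key new ingredient being that a "Berge edge-structure'' inside a large hyperedge is now a choice of an $r$-set rather than a $2$-set, so the relevant auxiliary object is an $r$-uniform Ramsey coloring instead of a graph $2$-coloring. First I would prove the bounded part $\sum_{h} |h|^r = O(n^r)$. Let $\cH$ be Berge-$\cF$-free with every hyperedge of size $\ge |V(\cF)|$, write $f = |E(\cF)|$ and $v = |V(\cF)|$. The idea is a greedy/packing argument: try to build a copy of Berge-$\cF$ by processing hyperedges and, for each one, choosing an as-yet-unused $r$-subset to play the role of an edge of $\cF$. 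The obstruction to this greedy process is that "many'' hyperedges must have all their $\binom{|h|}{r}$ subsets blocked, which forces a dense substructure. Concretely, I would show that if $\sum_h \binom{|h|}{r}$ exceeds $c\,n^r$ for a suitable constant $c = c(\cF)$, then one can find $f$ hyperedges together with a system of distinct representative $r$-sets forming a copy of $\cF$; since $\binom{|h|}{r} = \Theta_r(|h|^r)$ when $|h| \ge r$, this yields $\sum_h |h|^r = O(n^r)$. The cleanest way to organize this is to fix an injection of $V(\cF)$ into $[n]$ in all $\binom{n}{v} v!$ ways and, for each hyperedge $h$, count how many of these injections are "compatible'' with $h$ in the sense that some edge of $\cF$ maps inside $h$; a hyperedge of size $m$ is compatible with at least (a constant fraction of) $n^{v-r} m^r$ injections, and Berge-$\cF$-freeness plus a defect version of the pigeonhole / Hall argument caps the total, giving the bound.

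For the second, stronger statement I would use an $r$-uniform Ramsey argument in the spirit of Grósz--Methuku--Tompkins. Set $R := R^{(r)}(\cF,\cF)$ and suppose every hyperedge has size in $[R, o(n)]$. Toward a contradiction, assume $\sum_h |h|^r \ge \varepsilon n^r$ for some fixed $\varepsilon > 0$. The plan is: (i) a cleaning step discarding hyperedges to assume all hyperedges have size in a single dyadic band $[t, 2t]$ with $R \le t = o(n)$, still retaining weight $\Omega(n^r / \log n)$ or, better, passing to a subfamily whose sizes are all comparable so that the weight is $\Omega_\varepsilon(n^r)$ carried by $\Omega_\varepsilon(n^r / t^r)$ hyperedges; (ii) build Berge-$\cF$ greedily edge by edge. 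At each step we have used fewer than $f$ hyperedges and fewer than $vr$ "used'' vertices; we want a new hyperedge $h$ together with an $r$-subset of it, disjoint in the relevant coordinates from the partial embedding, extending it to a partial copy of $\cF$. The point of the Ramsey number is this: once we have located $|V(\cF)|$ vertices that will host $\cF$, inside each hyperedge $h$ that contains a prescribed $r$-set we cannot control \emph{which} $r$-set to pick; instead we $2$-color the $r$-subsets of $V(\cF)$ (or of a large hyperedge) by "is an edge of the target placement'' vs.\ not, and since $|h| \ge R^{(r)}(\cF,\cF)$, a monochromatic copy of $\cF$ appears, one of whose colors must be usable — this is exactly where $k \ge R^{(r)}(\cF,\cF)$ enters in Theorem~\ref{grmeto}, and I would mirror that here. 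Because $t = o(n)$, the "forbidden'' vertices used so far are a vanishing fraction, so the greedy choice is never obstructed for density reasons, and we obtain Berge-$\cF$, a contradiction.

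The main obstacle, I expect, is step (ii): making the greedy extension precise while simultaneously (a) guaranteeing distinct representative hyperedges (the bijection $f$ in the definition of Berge-$\cF$), (b) guaranteeing the chosen $r$-sets genuinely assemble into a copy of $\cF$ with the right vertex identifications, and (c) invoking the $r$-uniform Ramsey number in the right place so that "large hyperedge'' translates into "we may freely choose the role of some $r$-subset.'' In the graph case ($r=2$) the Ramsey step is the classical "large hyperedge contains a monochromatic $F$'' trick; the genuinely new work is checking that the $r$-uniform version of this trick composes correctly with the packing/count from the first part, and that the $o(n)$ upper bound on hyperedge sizes is used only to control the negligible set of already-used vertices. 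I would also need a mild pigeonhole to reduce to hyperedges of comparable size so that "weight $\Omega(n^r)$'' converts to "$\Omega(n^r/t^r)$ hyperedges,'' which is what the greedy argument actually consumes; this costs only a constant factor since there are $O(\log n)$ dyadic bands and, with a bit more care (choosing the densest band), no loss at all in the $o(n^r)$ conclusion. The first ($O(n^r)$) bound should be routine once the injection-counting is set up; I do not anticipate real difficulty there beyond bookkeeping constants depending on $\cF$.
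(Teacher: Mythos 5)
Your high-level plan (a double count plus an $r$-uniform Ramsey step inside large hyperedges) points in the right direction, but both halves are missing the ingredient that actually closes the bound. For the $O(n^r)$ part, the injection count only shows that a large weight forces \emph{some} injection of $V(\cF)$ to be compatible with many hyperedges; by pigeonhole that yields one $r$-set of high multiplicity, not a system of distinct representative hyperedges for all of $E(\cF)$, and no ``defect Hall'' argument caps the total without a further structural input. The input the paper uses is a multiplicity dichotomy on the $r$-shadow: call an $r$-set \emph{blue} if it lies in at most $|E(\cF)|-1$ hyperedges of $\cH$. Berge-$\cF$-freeness forces every copy of $\cF$ in the shadow to contain a blue $r$-set (otherwise a greedy choice of distinct host hyperedges builds a Berge-$\cF$), so the non-blue $r$-sets inside any single hyperedge $h$ form an $\cF$-free, hence $\cK^{(r)}_{|V(\cF)|}$-free, $r$-graph on $|h|\ge |V(\cF)|$ vertices; de Caen's Tur\'an bound then says they occupy at most an $\alpha<1$ fraction of $\binom{|h|}{r}$. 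Summing the remaining $(1-\alpha)\binom{|h|}{r}$ blue $r$-sets over $h$, each counted at most $|E(\cF)|-1$ times against the at most $\binom{n}{r}$ $r$-sets, gives $O(n^r)$. Without this Tur\'an-density step your count does not close, because nothing prevents a few $r$-sets from having enormous multiplicity.

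The gap in the second half is more serious: you never invoke the hypergraph removal lemma, and the greedy embedding you propose in its place does not work. Having $\Omega(n^r/t^r)$ hyperedges of size about $t=o(n)$ does not let you greedily assemble a copy of $\cF$ whose $r$-edges lie in distinct hyperedges: for $r\ge 3$ even a positive-density $r$-graph need not contain a single copy of $\cF$ (cliques have positive Tur\'an density), and the copies of $\cF$ that do exist in the shadow may all be confined to single hyperedges, so ``density reasons'' never produce the required intersecting pattern of $r$-sets. The paper instead first shows that the shadow contains only $o(n^{|V(\cF)|})$ copies of $\cF$ (any copy must have two $r$-edges in a common hyperedge, and $|h|=o(n)$ together with the first part bounds the count), applies the removal lemma to obtain a set $\cR$ of $o(n^r)$ $r$-edges meeting every copy of $\cF$, and only then runs the Ramsey argument: $2$-coloring $\Gamma^{(r)}(S)$ for an $R^{(r)}(\cF,\cF)$-subset $S\subseteq h$ by membership in $\cR$ forces a monochromatic $\cF$, which must lie inside $\cR$ and must contain a blue edge, so every such $S$ meets the blue part $\cR'$ of $\cR$; de Caen again converts this into $(1-\beta)\binom{|h|}{r}$ $r$-sets of each $h$ lying in a set of size $o(n^r)$ whose elements have bounded multiplicity. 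Your proposed coloring (``is an edge of the target placement'') is not the one that works, and the dyadic cleaning, while harmless, does not substitute for the removal lemma.
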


\begin{corollary}\label{cor1}
Let $\cF$ be an $r$-graph with $r \geq 2$ and let $\mathcal{H}$ be a Berge-$\cF$-free hypergraph on $n$ vertices.
Let  $w:\mathbb{Z}_+\rightarrow \mathbb{Z}_+$ be a weight function such that $w(m)=o(m^r)$.
If every hyperedge  of $\mathcal{H}$ has size at least $R^{(r)}(\cF,\cF)$, then 
\[
\sum_{h\in E(\mathcal{H})} w(|h|) = o(n^r).
\]
\end{corollary}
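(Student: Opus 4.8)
The plan is to deduce Corollary~\ref{cor1} from Theorem~\ref{thm1} by the same splitting argument that yields Corollary~\ref{EGMP2} from Theorem~\ref{EGMP}. Fix $\varepsilon>0$. Since $w(m)=o(m^r)$, there is an integer $M=M(\varepsilon)\ge R^{(r)}(\cF,\cF)$ such that $w(m)\le\varepsilon m^r$ for all $m\ge M$. I would then partition the hyperedges of $\cH$ according to size: let $E_{\mathrm{small}}$ be the set of hyperedges $h$ with $|h|<\sqrt n$ and $E_{\mathrm{large}}$ the set with $|h|\ge\sqrt n$.

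For the large hyperedges, note that every hyperedge of $\cH$ has size at least $R^{(r)}(\cF,\cF)\ge|V(\cF)|$, so the first part of Theorem~\ref{thm1} gives $\sum_{h\in E(\cH)}|h|^r=O(n^r)$ with an implied constant $C=C(\cF)$ that does \emph{not} depend on $\varepsilon$. Once $n$ is large enough that $\sqrt n\ge M$, every $h\in E_{\mathrm{large}}$ satisfies $w(|h|)\le\varepsilon|h|^r$, hence
\[
\sum_{h\in E_{\mathrm{large}}}w(|h|)\le\varepsilon\sum_{h\in E(\cH)}|h|^r\le\varepsilon C n^r .
\]
For the small hyperedges, the subhypergraph of $\cH$ with edge set $E_{\mathrm{small}}$ is again Berge-$\cF$-free, all its hyperedges have size at least $R^{(r)}(\cF,\cF)$ and at most $\sqrt n=o(n)$, so the second part of Theorem~\ref{thm1} gives $\sum_{h\in E_{\mathrm{small}}}|h|^r=o(n^r)$. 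There is a constant $D=D(\cF,w,\varepsilon)$ with $w(j)\le Dj^r$ for every integer $j\ge R^{(r)}(\cF,\cF)$ (take $D$ to be the maximum of $\varepsilon$ and the finitely many values $w(j)$ for $R^{(r)}(\cF,\cF)\le j<M$, using $j^r\ge1$), so $\sum_{h\in E_{\mathrm{small}}}w(|h|)\le D\cdot o(n^r)=o(n^r)$. Adding the two estimates gives $\limsup_{n\to\infty}n^{-r}\sum_{h\in E(\cH)}w(|h|)\le\varepsilon C$, and since $\varepsilon>0$ was arbitrary this limit is $0$, which is the assertion.

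The argument is essentially bookkeeping once Theorem~\ref{thm1} is in hand, so I do not expect a genuine obstacle. The only point requiring mild care is that the non-negligible term $\varepsilon C n^r$ must be drawn from the first, $O(n^r)$, bound of Theorem~\ref{thm1}, whose constant is independent of $\varepsilon$, while the truly $o(n^r)$ gain for the small hyperedges can safely absorb the $\varepsilon$-dependent constant $D$. One must also split the hyperedges at a threshold that is simultaneously $\omega(1)$ (so that it eventually exceeds $M$) and $o(n)$ (so that the second part of Theorem~\ref{thm1} applies); $\sqrt n$ works, and any function with these two properties would serve equally well.
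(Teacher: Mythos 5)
Your proposal is correct and follows essentially the same route as the paper: split the hyperedges at the threshold $\sqrt n$, apply the second ($o(n^r)$) part of Theorem~\ref{thm1} to the small hyperedges and the first ($O(n^r)$) part together with $w(m)\le\varepsilon m^r$ for large $m$ to the large ones. Your write-up is in fact somewhat more careful than the paper's about where the $\varepsilon$-independent constant must come from and about bounding $w$ on the finitely many sizes below $M(\varepsilon)$.
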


Note that the above corollary is sharp in the sense that we cannot take a larger weight function as the hypergraph of a single hyperedge of size $\Omega(n)$ has weight $\Omega(n^r)$. We will prove Theorem~\ref{thm1} and Corollary~\ref{cor1} in Section~\ref{thm7pf}.
Another corollary is the following hypergraph analogue of Theorem~\ref{grmeto}.

\begin{corollary}\label{cor2} Let $\cF$ be an $r$-graph with $r \geq 2$ and let $\mathcal{H}$ be a $k$-uniform Berge-$\cF$-free hypergraph on $n$ vertices. If $k \geq R^{(r)}(\cF,\cF)$, then we have 
\[
|E(\mathcal{H})| = o(n^r).
\]
\end{corollary}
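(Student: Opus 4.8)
The plan is to derive this as an immediate consequence of Theorem~\ref{thm1}. Let $\cF$ be an $r$-graph with $r\ge 2$, and let $\cH$ be a $k$-uniform Berge-$\cF$-free hypergraph on $n$ vertices with $k\ge R^{(r)}(\cF,\cF)$. The subtlety is that Theorem~\ref{thm1} requires the hyperedges to have size \emph{at most} $o(n)$ in order to conclude $o(n^r)$, whereas here $k$ is merely a fixed constant (it does not grow with $n$), so there is no difficulty: for $n$ large enough we certainly have $k \le o(n)$, and moreover $k \ge R^{(r)}(\cF,\cF) \ge |V(\cF)|$, so all the hypotheses of the second part of Theorem~\ref{thm1} are satisfied.

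The key step is then a one-line computation. Since every hyperedge $h\in E(\mathcal H)$ has $|h| = k$, we get
\[
k^r \cdot |E(\mathcal H)| \;=\; \sum_{h\in E(\mathcal H)} |h|^r \;=\; o(n^r),
\]
where the last equality is the conclusion of Theorem~\ref{thm1}. Dividing by the constant $k^r$ yields $|E(\mathcal H)| = o(n^r)$, as desired.

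I do not anticipate any genuine obstacle here; the only thing to be careful about is the phrasing of the asymptotics. One should note that the implied $o(\cdot)$ bound in Theorem~\ref{thm1} may depend on $k$ (through the requirement $|h|\le o(n)$), but since $k$ is a fixed constant once $\cF$ and the uniformity are fixed, this dependence is harmless and the resulting bound $|E(\mathcal H)| = o(n^r)$ is a statement purely about $n\to\infty$ with $\cF$ and $k$ fixed. Alternatively, one could observe that Corollary~\ref{cor1} applied with the weight function $w(m) = \mathbb{1}[m=k]$ (extended to a suitable $o(m^r)$ weight function) gives the same conclusion, but invoking Theorem~\ref{thm1} directly is cleanest.
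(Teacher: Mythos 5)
Your proposal is correct and is essentially the paper's route: the paper obtains Corollary~\ref{cor2} as an immediate consequence of Corollary~\ref{cor1} (take the constant weight $w\equiv 1$, which is $o(m^r)$), while you apply the second part of Theorem~\ref{thm1} directly after noting that a fixed uniformity $k$ satisfies $k=o(n)$ and $k\ge R^{(r)}(\cF,\cF)\ge |V(\cF)|$; in the uniform case these two derivations collapse to the same one-line computation $k^r|E(\cH)|=\sum_h|h|^r=o(n^r)$. The only cosmetic difference is that the route through Corollary~\ref{cor1} needs no upper bound on the hyperedge sizes at all, so it sidesteps even the (harmless) remark you make about $k\le o(n)$.
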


In the above corollaries the upper bound cannot be improved by much, even if the threshold $R^{(r)}(\cF,\cF)$ on the sizes of the hyperedges is increased (as long as the threshold does not depend on $n$). Indeed, let $\cF$ be an $r$-graph consisting of three hyperedges on $r+1$ vertices. Then a $k$-uniform Berge-$\cF$ has three hyperedges on at most $3(k-r)+k+1$ vertices. Alon and Shapira \cite{alsh} constructed for any $2\le r<k$ a $k$-uniform hypergraph with the property that any $3(k-r)+k+1$ vertices span less than $3$ edges, with $n^{r-o(1)}$ hyperedges. These hypergraphs are obviously Berge-$\cF$-free, giving the lower bound of $n^{r-o(1)}$ for this particular $\cF$ in both corollaries.

\smallskip

Before stating our analogue of Lemma~\ref{celeb2} we need to introduce further definitions.
Given two $k$-graphs $\cH$ and $\cF$, let $\ex_k(n,\cH,\cF)$ denote the maximum number of copies of $\cH$ in $\cF$-free $k$-graphs on $n$ vertices. Let $N(\cH,\cH')$ denote the number of subhypergraphs of $\cH'$ that are isomorphic to $\cH$. Let $\cK_s^{(k)}$ be the complete $k$-graph on $s$ vertices (containing all possible $\binom{s}{k}$ hyperedges). 

As in the graph case we say that a hypergraph $\cH$ is \textit{red-blue} if each of its hyperedges is colored with one of the colors red and blue. Let $\cH_{red}$ and $\cH_{blue}$ be the subhypergraphs of $\cH$ spanned by the red hyperedges and the blue hyperedges of $\cH$, respectively. Put $g_r^k(\cH)=|E(\cH_{red})|+N(\cK_r^{(k)},\cH_{blue})$.

\begin{lemma}\label{celebhyp}  
For any $k$-graph $\cF$ and integers $r,n$ there is a red-blue $\cF$-free $k$-graph $\cH$ on $n$ vertices such that $\cH_{red}$ is $\cK_r^{(k)}$-free and $\ex_r(n,\textup{Berge-}\cF)\le g_r(\cH)$.
\end{lemma}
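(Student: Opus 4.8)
The plan is to mimic the proof of Lemma~\ref{celeb2} (equivalently the argument of F\"uredi, Kostochka, and Luo), adapting it from graphs to $k$-graphs. Start with a Berge-$\cF$-free $k$-graph $\cH_0$ on $n$ vertices achieving $\ex_r(n,\textup{Berge-}\cF)$ hyperedges; so $|E(\cH_0)|=\ex_r(n,\textup{Berge-}\cF)$. Since $\cH_0$ is Berge-$\cF$-free, there is a function assigning to each hyperedge $e\in E(\cH_0)$ a ``trace'' $\varphi(e)\in\binom{e}{r}$ (an $r$-subset of $e$) such that no copy of $\cF$ arises in the following sense: we cannot find $|E(\cF)|$ distinct hyperedges of $\cH_0$ whose chosen $r$-subsets form a copy of $\cF$. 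This is exactly the negation of being Berge-$\cF$; one must be a little careful because Berge-$\cF$-freeness is about \emph{some} bijection failing, but the standard observation is that $\cH$ contains a Berge-$\cF$ iff for \emph{every} choice function $\varphi$ the image multiset $\{\varphi(e):e\in E(\cH)\}$ contains a copy of $\cF$ as a sub-$r$-graph with all edges coming from distinct hyperedges — so Berge-$\cF$-freeness gives us a \emph{single} good choice function $\varphi$ with the property that the $r$-graph on $V(\cH_0)$ with edge multiset $\{\varphi(e)\}$ has no copy of $\cF$ realized by distinct hyperedges.

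Next I would build the red-blue $k$-graph $\cH$ on the same vertex set $V(\cH_0)$. Say an $r$-set $S\subseteq V(\cH_0)$ is \emph{heavy} if it is the trace $\varphi(e)$ of at least two distinct hyperedges $e$ of $\cH_0$, and \emph{light} otherwise. For each light $r$-set $S$ that is a trace, pick the unique hyperedge $e$ with $\varphi(e)=S$ and color it \textbf{red} in $\cH$; color every remaining hyperedge of $\cH_0$ \textbf{blue}. Then $|E(\cH_{red})|$ counts exactly the light traces, and the blue hyperedges all have traces lying among the heavy $r$-sets. The key point is that if $S$ is a heavy $r$-set then $S$ cannot be an edge of the ``trace $r$-graph'' without creating problems: more precisely, the heavy $r$-sets span an $r$-graph on $V(\cH_0)$ that is $\cF$-free, because any copy of $\cF$ among heavy $r$-sets would, since each heavy set is the trace of $\ge 2$ hyperedges, let us greedily pick distinct hyperedges realizing that copy of $\cF$, i.e.\ a Berge-$\cF$ — contradiction. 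Hence the set $B$ of heavy $r$-sets forms an $\cF$-free $r$-graph on $n$ vertices, so $|B|\le \ex_r(n,\cF)$, but more importantly we need to bound the number of \emph{blue hyperedges} of $\cH$, not just $|B|$.

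To handle the blue part, I would form $\cH_{blue}$ concretely: its $k$-edges are those hyperedges of $\cH_0$ colored blue. I then want to show $g_r^k(\cH)=|E(\cH_{red})|+N(\cK_r^{(k)},\cH_{blue})\ge |E(\cH_0)|=\ex_r(n,\textup{Berge-}\cF)$, i.e.\ that the number of copies of $\cK_r^{(k)}$ inside $\cH_{blue}$ is at least the number of blue hyperedges. This is the point where the ``hypergraph based'' setting needs care. In the graph case ($r=2$) every blue edge of $\cH_0$ of size $\ge 3$ — and a blue edge is a $k$-set with $k\ge 3$ once we are past trivialities — contains $\binom{k}{2}\ge 3$ pairs, and one shows copies of $K_r$ (here $K_2$, just pairs) are plentiful; the general mechanism (following F\"uredi--Kostochka--Luo) is that each blue hyperedge can be charged to a distinct copy of $\cK_r^{(k)}$ in $\cH_{blue}$. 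In our setting I expect the argument to be: within each blue hyperedge $e$ (a $k$-set), since $k$ is large, there are $\binom{k}{r}$ $r$-subsets; the heavy ones (images of $\ge 2$ hyperedges) are numerous; one uses a counting/defect argument (essentially a Kruskal--Katona or averaging step, exactly as in \cite{sgmp,gmp}) to find, for each blue hyperedge, a copy of $\cK_r^{(k)}$ in $\cH_{blue}$ with a charging scheme that is injective, so that $N(\cK_r^{(k)},\cH_{blue})\ge |E(\cH_{blue})|$. Combining, $g_r^k(\cH)\ge |E(\cH_{red})|+|E(\cH_{blue})|=|E(\cH_0)|=\ex_r(n,\textup{Berge-}\cF)$, and by construction $\cH_{red}$ is $\cK_r^{(k)}$-free (it is an $r$-graph embedded as light traces, in particular its $k$-edges have mutually distinct $r$-traces, so no $\cK_r^{(k)}$ — this needs the precise red-coloring to be tuned so no $r$-subset repeats, which is exactly why we colored only light traces red) and $\cH$ is $\cF$-free in the red-blue sense inherited from $\cH_0$ being Berge-$\cF$-free.

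The main obstacle I anticipate is the injective charging of blue hyperedges to copies of $\cK_r^{(k)}$ in $\cH_{blue}$: unlike the graph case, a $k$-uniform hyperedge is a single $k$-set and it is not a priori clear that $\cH_{blue}$ (as a $k$-graph, i.e.\ a family of $k$-sets) contains many copies of the complete $k$-graph $\cK_r^{(k)}$ — indeed $\cK_r^{(k)}$ has $\binom{r}{k}$ edges, which is only meaningful when $r\ge k$, so the statement implicitly concerns $r\ge k$ and one is counting, inside the family of blue $k$-sets, sub-families forming $\cK_r^{(k)}$. Getting the definitions to line up so that each blue hyperedge genuinely contributes a fresh copy of $\cK_r^{(k)}$ — rather than several blue hyperedges all sitting inside one such copy — is the delicate combinatorial core, and I would resolve it by the same double-counting/extremal-set argument used in the graph case, replacing ``pairs inside an edge'' with ``$r$-subsets inside a $k$-edge'' and invoking that heavy traces are the only ones available to blue hyperedges.
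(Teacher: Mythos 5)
There is a genuine gap, and it sits exactly at the step you flag as ``the delicate combinatorial core.'' Two problems. First, the objects are set up with $r$ and $k$ interchanged: here $\cF$ is $k$-uniform, the extremal host $\cH_0$ realizing $\ex_r(n,\textup{Berge-}\cF)$ is \emph{$r$-uniform} with $r\ge k$, a ``trace'' of a hyperedge $e\in E(\cH_0)$ is a $k$-subset of the $r$-set $e$, and the red-blue hypergraph $\cH$ of the lemma must be a \emph{$k$-graph} living on the $k$-shadow of $\cH_0$ --- not a red/blue coloring of the hyperedges of $\cH_0$ themselves. As written, your $\cH_{blue}$ consists of $r$-sets, so $N(\cK_r^{(k)},\cH_{blue})$ does not typecheck, and the claimed identity $|E(\cH_{red})|+N(\cK_r^{(k)},\cH_{blue})=|E(\cH_0)|$ is not the quantity $g_r^k(\cH)$ the lemma is about. (Minor aside: your ``standard observation'' about choice functions has the quantifiers backwards --- Berge-$\cF$-freeness means \emph{every} choice function yields an $\cF$-free trace system, not that some choice function does --- though the conclusion you draw from it happens to be harmless.)

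Second, and more seriously, the heavy/light dichotomy does not produce the charging you need. Knowing that a blue hyperedge's single chosen trace is ``heavy'' says nothing about the other $\binom{r}{k}-1$ $k$-subsets of that hyperedge, so there is no reason that hyperedge spans, or can be injectively charged to, a copy of $\cK_r^{(k)}$ in $\cH_{blue}$; no averaging or Kruskal--Katona step is supplied, and none is readily available from this construction. The paper's proof gets this structure for free from a different device: form the bipartite incidence graph between $A=E(\cH_0)$ and $B=$ the $k$-shadow of $\cH_0$, take a maximum matching $M$, and apply the K\H{o}nig-type Lemma~\ref{cel2} to split $A=A_1\cup A_2$ and the matched shadow $B'=B_1\cup B_2$ so that (i) $M$ matches $A_1$ bijectively into $B_1$ (the red $k$-edges), giving $|A_1|=|B_1|$, and (ii) \emph{every} $k$-subset of every $r$-edge in $A_2$ lies in $B_2$ (the blue $k$-edges), so each such $r$-edge spans a full $\cK_r^{(k)}$ in $\cH_{blue}$, and distinct $r$-edges give distinct copies; the $\cF$-freeness of $\cH$ then follows because a copy of $\cF$ inside the matched shadow would lift along $M$ to a Berge-$\cF$ in $\cH_0$. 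That maximum-matching argument is the missing idea; your construction would need to be replaced by it (or by something equivalent) rather than patched.
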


Note that the lemma implies 
\[
\ex_k(n,\cK_r^{(k)},\cF)\le\ex_r(n,\textup{Berge-}\cF)\le\ex_k(n,\cK_r^{(k)},\cF)+\ex_k(n,\cF).
\]

We prove Lemma~\ref{celebhyp} in Section~\ref{berge-section}. We also obtain some simple results in case $\cF$ itself is a graph based hypergraph (for example $\cF$ is a specific Berge copy of a graph $F_0$). Recall that the \emph{expansion} $F_0^{+k}$ of a graph $F_0$ is the Berge copy of $F_0$ which is constructed by adding $r-2$ new and distinct vertices to each edge of $F_0$. 
Using Lemma \ref{celebhyp} we can establish asymptotics for $\ex_k(n,\cK_r^{(k)},F_0^{+k})$ in case $\chi(F_0)>r>k$.

\section{Proof of Theorem~\ref{thm1}}\label{thm7pf}

\begin{proof}[Proof of Theorem~\ref{thm1}]
For $r \geq 2$, let us consider  the $r$-graph $\Gamma^{(r)}(S)$ with vertex set $S$ and whose edge set is the collection of all subsets of $S$ of size $r$. For a hypergraph $\mathcal{H}$, its \emph{$r$-shadow}, 
$\Gamma^{(r)}(\mathcal{H})$, is the $r$-graph with vertex set $V(\Gamma^{(r)}(\mathcal{H})) \colonequals V(\mathcal{H})$ and whose edge set is the set of all $r$-tuples contained in at least one hyperedge of $\mathcal{H}$.
  We will use the following theorem of de Caen \cite{deca} about the Tur\'an number  of $\mathcal{K}^{(r)}_s$ for $s > r >2$.

  \begin{thm}[de Caen \cite{deca}]\label{dec}  For $s > r > 2$, 
  \[
  \ex_r(n,\mathcal{K}^{(r)}_s )\leq \binom{n}{r} \left( 1 - \frac{n-r+1}{ (n-s+1) \binom{s-1}{r-1}} \right) .
  \]
  \end{thm}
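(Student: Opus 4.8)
The plan is to recast the statement as a lower bound on the number of \emph{non-edges} and then to sharpen the elementary averaging bound. Write $m=\ex_r(n,\mathcal{K}^{(r)}_s)$ for the number of $r$-edges of an extremal $\mathcal{K}^{(r)}_s$-free $r$-graph $G$ on $n$ vertices, and $\bar m=\binom{n}{r}-m$ for the number of $r$-subsets of $V(G)$ that are not edges. A direct manipulation shows that the claimed inequality is equivalent to
\[
\bar m\ \ge\ \frac{n-r+1}{(n-s+1)\binom{s-1}{r-1}}\binom{n}{r}.
\]
Thus everything reduces to showing that a $\mathcal{K}^{(r)}_s$-free $r$-graph must omit at least this many $r$-sets.

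The first, crude step is the averaging bound. Since $G$ contains no $\mathcal{K}^{(r)}_s$, every $s$-subset of $V(G)$ spans at least one non-edge. Double counting the incidences between non-edges and the $s$-sets containing them — each non-edge lies in $\binom{n-r}{s-r}$ of the $\binom{n}{s}$ available $s$-sets — yields $\bar m\,\binom{n-r}{s-r}\ge\binom{n}{s}$, that is, $\bar m\ge\binom{n}{r}/\binom{s}{r}$. This already has the right shape but a weaker constant: since $\binom{s}{r}=\frac{s}{r}\binom{s-1}{r-1}$ and $\frac{n-r+1}{n-s+1}>1>\frac{r}{s}$, de Caen's bound exceeds the averaging bound by a factor larger than $s/r$, and recovering this factor is the actual content.

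To sharpen the constant I would localise the clique-free condition through the $(r-1)$-shadow. For each $(r-1)$-set $R$ let $d(R)$ denote its codegree, so $\sum_R d(R)=rm$ and the number of non-edges through $R$ is $(n-r+1)-d(R)$. The idea is to count configurations $(R,S)$ with $R\subseteq S$, $|S|=s$, in which $S$ is ``complete above $R$'' (every $r$-set $R\cup\{x\}$ with $x\in S\setminus R$ is an edge). From the $R$-side such configurations number exactly $\sum_R\binom{d(R)}{s-r+1}$, while from the $S$-side the $\mathcal{K}^{(r)}_s$-free condition caps the number of admissible $R$ per $s$-set at $\binom{s}{r-1}-1$ (being complete above all $\binom{s}{r-1}$ of its $(r-1)$-subsets would force $S$ itself to be complete). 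Combining these two counts with the identity $\sum_R d(R)=rm$ and a convexity estimate on $\binom{d(R)}{s-r+1}$ produces an upper bound on $m$; the bookkeeping must then be arranged so that the linear factors $(n-r+1)$ and $(n-s+1)$ together with the coefficient $\binom{s-1}{r-1}$ emerge exactly.

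The main obstacle is precisely this last extraction of the exact coefficient. Naive double counting, and even a straightforward Jensen or Cauchy--Schwarz estimate applied to the codegrees $d(R)$, deliver bounds strictly weaker than de Caen's (at best the density $\binom{s}{r}^{-1}$, and with the convexity route often worse). Pinning down the sharp coefficient $\frac{n-r+1}{(n-s+1)\binom{s-1}{r-1}}$ — equivalently the limiting Turán density bound $1-\binom{s-1}{r-1}^{-1}$ — requires de Caen's more careful weighting of the counted incidences, and this is the delicate point I would expect to spend the most effort on. For the present paper the inequality is needed only as an external ingredient, applied to the $r$-shadow $\Gamma^{(r)}(\mathcal{H})$, so it suffices to invoke it in the cited form.
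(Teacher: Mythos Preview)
The paper does not prove this theorem at all: it is quoted as a known result of de Caen and used only through the immediate corollary that for $n\ge s$ there exists $\alpha=\alpha(r,s)<1$ with $\ex_r(n,\mathcal{K}^{(r)}_s)\le\alpha\binom{n}{r}$. So there is nothing to compare your argument to on the paper's side.

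As a standalone attempt at de Caen's inequality, your sketch is honest but incomplete, and you say so yourself: the averaging bound gives only $\bar m\ge\binom{n}{r}/\binom{s}{r}$, and the passage from there to the sharp coefficient $\frac{n-r+1}{(n-s+1)\binom{s-1}{r-1}}$ via codegree counts and convexity is precisely the content of de Caen's argument, which you outline but do not carry out. That is a genuine gap if the goal is a self-contained proof of the stated bound.

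That said, for the purposes of this paper your first step is already enough. The only consequence of Theorem~\ref{dec} actually used in the proofs of Claims~\ref{claim1} and~\ref{claim3} is the existence of some $\alpha<1$ (depending only on $r$ and $s$) bounding the edge density of a $\mathcal{K}^{(r)}_s$-free $r$-graph on $\ge s$ vertices. Your averaging bound $\bar m\ge\binom{n}{r}/\binom{s}{r}$ gives exactly this with $\alpha=1-\binom{s}{r}^{-1}$, so the paper's applications go through without ever invoking the sharp de Caen constant.
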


More precisely, we will use the following simple corollary: for $n \geq s$, there is an $\alpha=\alpha(r,s)<1$, such that $\ex_r(n,\mathcal{K}^{(r)}_s )\le \alpha\binom{n}{r}$.
This also holds for $r=2$ by Tur\'an's theorem.


Let $\cF$ be an $r$-graph and $\mathcal{H}$ be Berge-$\cF$-free hypergraph on $n$ vertices. We call an $r$-edge in $\Gamma^{(r)}(\mathcal{H})$ {\it blue} if it is contained in at most $|E(\cF)|-1$ hyperedges of $\mathcal{H}$.
  \begin{clm}\label{claim1} There exists a constant $\alpha = \alpha(r,|V(\cF)|) <1$ such that for any hyperedge $h \in E(\mathcal{H})$, 
  the number of blue  $r$-edges in $\Gamma^{(r)}(h)$ is at least 
 $ (1- \alpha) { \binom{|h|}{r}} $.
 \end{clm}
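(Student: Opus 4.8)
The plan is to prove the contrapositive-flavored statement: if too many $r$-edges inside a fixed hyperedge $h$ are \emph{not} blue (i.e.\ each lies in at least $|E(\cF)|$ hyperedges of $\mathcal{H}$), then we can greedily build a Berge copy of $\cF$, contradicting Berge-$\cF$-freeness. Call an $r$-edge of $\Gamma^{(r)}(h)$ \emph{red} if it is contained in at least $|E(\cF)|$ hyperedges of $\mathcal{H}$; these are exactly the non-blue ones. Suppose for contradiction that the red $r$-edges number more than $\alpha\binom{|h|}{r}$, where $\alpha=\alpha(r,|V(\cF)|)<1$ is the de Caen / Tur\'an constant such that $\ex_r(m,\mathcal{K}^{(r)}_{|V(\cF)|})\le \alpha\binom{m}{r}$ for all $m\ge |V(\cF)|$ (applied here with $m=|h|\ge |V(\cF)|$). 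Then the red $r$-graph on the vertex set of $h$ has more edges than $\ex_r(|h|,\mathcal{K}^{(r)}_{|V(\cF)|})$, hence contains a complete $r$-graph $\mathcal{K}^{(r)}_{|V(\cF)|}$ on some set $W\subseteq h$ with $|W|=|V(\cF)|$.

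Next I would use $W$ to embed $\cF$. Fix a bijection from $V(\cF)$ to $W$; since $W$ spans a complete $r$-graph in red, every hyperedge $e$ of $\cF$ is mapped to a red $r$-edge $e'\subseteq W\subseteq h\subseteq V(\mathcal{H})$. Now process the hyperedges $e_1,\dots,e_{|E(\cF)|}$ of $\cF$ one at a time, and for each $e_i$ choose a hyperedge $f(e_i)\in E(\mathcal{H})$ with $e_i'\subseteq f(e_i)$, making sure to pick $f(e_i)$ distinct from the finitely many hyperedges $f(e_1),\dots,f(e_{i-1})$ already used. This is possible because each red $r$-edge $e_i'$ lies in at least $|E(\cF)|$ hyperedges of $\mathcal{H}$, and at step $i$ we have excluded at most $i-1\le |E(\cF)|-1$ of them, leaving at least one available. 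The resulting map $f$ is an injection from $E(\cF)$ into $E(\mathcal{H})$ with $e_i'\subseteq f(e_i)$ for all $i$; together with the vertex embedding $V(\cF)\to W\subseteq V(\mathcal{H})$ this exhibits a Berge copy of $\cF$ in $\mathcal{H}$, contradicting the hypothesis. Therefore the number of red $r$-edges in $\Gamma^{(r)}(h)$ is at most $\alpha\binom{|h|}{r}$, so the number of blue ones is at least $(1-\alpha)\binom{|h|}{r}$, as claimed.

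The one point needing a little care — and the only real obstacle — is the greedy distinctness of the $f(e_i)$: one must verify that ``contained in at least $|E(\cF)|$ hyperedges of $\mathcal{H}$'' gives enough room to avoid all previously chosen hyperedges throughout the process, which works out exactly because the number of hyperedges of $\cF$ to be embedded is $|E(\cF)|$ and we only ever need to dodge at most $|E(\cF)|-1$ earlier choices. Everything else is a direct application of de Caen's bound (Theorem~\ref{dec}), with the constant $\alpha$ depending only on $r$ and $|V(\cF)|$ since the forbidden clique size $|V(\cF)|$ depends only on $\cF$; note the case $r=2$ is covered separately by Tur\'an's theorem as remarked after Theorem~\ref{dec}.
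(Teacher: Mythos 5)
Your proof is correct and takes essentially the same route as the paper: both arguments bound the non-blue $r$-edges inside $h$ via de Caen's theorem (using that a $\cK_{|V(\cF)|}^{(r)}$ among them would yield a copy of $\cF$ all of whose $r$-edges lie in at least $|E(\cF)|$ hyperedges of $\cH$), and then derive a contradiction by greedily selecting distinct containing hyperedges to build a Berge-$\cF$. Your explicit count showing the greedy step never needs to dodge more than $|E(\cF)|-1$ prior choices is exactly the justification the paper leaves implicit.
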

 \begin{proof}[Proof of Claim~\ref{claim1}]
 We first show that every copy of~$\cF$ in $\Gamma^{(r)}(\mathcal{H})$ contains a blue $r$-edge. Suppose otherwise for the sake of a contradiction. Then, by the definition of a blue $r$-edge, every $r$-edge in a copy of $\cF$ is contained in at least $|E(\cF)|$ hyperedges of~$\mathcal{H}$. Therefore, we can construct greedily a copy of a Berge-$\cF$ in $\mathcal{H}$ by choosing, for each $r$-edge $f$ of $\cF$ a unique hyperedge of $\cH$ that contains $f$; a contradiction.  
 
 Fix a hyperedge $h \in E(\cH)$ and consider the subhypergraph of $\Gamma^{(r)}(h)$ formed by non-blue $r$-edges. 
 By the above argument, this subhypergraph of $\Gamma^{(r)}(h)$ is $\cF$-free.
 By Theorem~\ref{dec} there exists a constant $\alpha = \alpha(r,|V(\cF)|) < 1$ such that 
  the number of non-blue $r$-edges in $\Gamma^{(r)}(h)$ is at most $\alpha  \binom{|h|}{r}$, as $|h| \geq |V(\cF)|$.
 Thus, the number of blue $r$-edges in $\Gamma^{(r)}(h)$ is at least $ (1-\alpha)  \binom{|h|}{r}$.
\end{proof}

By applying Claim~\ref{claim1}, we obtain the following inequality.

\begin{align*}
\sum_{h \in E(\mathcal{H})} (1-\alpha)  \binom{|h|}{r} \leq  |\{ \text{blue } r\text{-edges in }  \Gamma^{(r)}(\mathcal{H})\}| \cdot (|E(F)|-1) = O(n^r).
\end{align*}

This implies that 
\begin{equation}\label{part1}
 \sum_{h \in E(\mathcal{H})} |h|^r = O(n^r)   
\end{equation}
which proves the first part of Theorem~\ref{thm1}.









It remains to prove the second part of Theorem~\ref{thm1}.
We may now assume that every hyperedge of $\cH$ has size at least $R^{(r)}(\cF,\cF)$ and at most $o(n)$.

\begin{clm}\label{claim2}
The number of copies of $\cF$ in $\Gamma^{(r)}(\mathcal{H})$ is $o(n^{|V(\cF)|})$.
\end{clm}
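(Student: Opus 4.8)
The plan is to show that $\Gamma^{(r)}(\mathcal{H})$ cannot contain many copies of $\cF$ by exploiting both the Berge-$\cF$-freeness of $\mathcal{H}$ and the fact that every hyperedge is large (size at least $R^{(r)}(\cF,\cF)$). The key structural fact, already established in the proof of Claim~\ref{claim1}, is that every copy of $\cF$ in $\Gamma^{(r)}(\mathcal{H})$ must use at least one \emph{blue} $r$-edge, i.e.\ an $r$-edge lying in at most $|E(\cF)|-1$ hyperedges of $\mathcal{H}$. First I would bound the total number of blue $r$-edges: from the inequality displayed just before \eqref{part1}, the number of blue $r$-edges in $\Gamma^{(r)}(\mathcal{H})$ is $O(n^r)$, but more is true — each blue $r$-edge $f$ lies in at most $|E(\cF)|-1$ hyperedges, and by the already-proven \eqref{part1} the hyperedges are so sparse (in the sense $\sum_h|h|^r = O(n^r)$) that one can get a handle on how the blue edges are distributed. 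So the strategy reduces to: count copies of $\cF$ by the blue $r$-edge they are forced to contain.

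Next, fix a blue $r$-edge $f$ and count copies of $\cF$ in $\Gamma^{(r)}(\mathcal{H})$ that contain $f$ as one of their $r$-edges. Such a copy of $\cF$ has $|V(\cF)|$ vertices, $r$ of which are pinned down by $f$; the remaining $|V(\cF)|-r$ vertices must be chosen so that all the other $r$-edges of the copy lie in $\Gamma^{(r)}(\mathcal{H})$. The crucial point is that $\cF$ is connected through $f$ in a limited way, but more importantly: each of the other $r$-edges of this copy lies in \emph{some} hyperedge of $\mathcal{H}$, and since $f$ lies in at most $|E(\cF)|-1$ hyperedges, one of those hyperedges $h$ (of size at most $o(n)$) must ``interact'' with the copy. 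Here is where the size bound $|h| = o(n)$ enters decisively: a copy of $\cF$ containing $f$ either lives essentially inside the $o(n)$-sized shadow of a single hyperedge through $f$, or it spreads out — and the Ramsey threshold $R^{(r)}(\cF,\cF)$ should be used to show the ``spread out'' copies are already controlled. I would run the argument hyperedge-by-hyperedge: for each hyperedge $h$, the number of copies of $\cF$ inside $\Gamma^{(r)}(h)$ is at most $\binom{|h|}{|V(\cF)|} = O(|h|^{|V(\cF)|})$, and summing $\sum_h |h|^{|V(\cF)|} \le (\max_h |h|)^{|V(\cF)|-r}\sum_h|h|^r = o(n)^{|V(\cF)|-r}\cdot O(n^r) = o(n^{|V(\cF)|})$, using $\max_h|h| = o(n)$ and $|V(\cF)| > r$.

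The main obstacle, and the place requiring care, is handling copies of $\cF$ in $\Gamma^{(r)}(\mathcal{H})$ whose $r$-edges are \emph{not} all contained in a single hyperedge of $\mathcal{H}$ — these are not counted by the naive $\sum_h \binom{|h|}{|V(\cF)|}$ bound above. To deal with them I would argue that any copy of $\cF$ in the shadow, being a fixed finite $r$-graph on $|V(\cF)|$ vertices, has its vertex set covered by $r$-edges each living in some hyperedge; if all these hyperedges were large, Berge-$\cF$-freeness would be violated unless the copy forces a blue $r$-edge — which it does by the Claim~\ref{claim1} argument — so every copy is ``anchored'' at a blue $r$-edge that lies in at most $|E(\cF)|-1$ hyperedges. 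Then the count becomes: (number of blue $r$-edges) $\times$ (copies of $\cF$ through a fixed blue $r$-edge). For the inner count, a copy through blue $r$-edge $f$ has each of its remaining $r$-edges in a hyperedge, and since the whole copy is connected, following the structure of $\cF$ one sees the copy is confined to the union of at most $|E(\cF)|$ hyperedges each of size $o(n)$, giving $O(o(n)^{|V(\cF)|-r})$ such copies; multiplying by the $O(n^r)$ blue $r$-edges yields $o(n^{|V(\cF)|})$. Balancing these two counting schemes correctly — and in particular justifying that a copy of $\cF$ not confined to a single hyperedge still lies in boundedly many hyperedges — is the delicate step.
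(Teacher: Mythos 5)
There is a genuine gap in your second counting scheme, the one meant to handle copies of $\cF$ not contained in the shadow of a single hyperedge. You anchor each copy at a blue $r$-edge $f$ and claim the copies through $f$ number $O(o(n)^{|V(\cF)|-r})$ because each such copy ``is confined to the union of at most $|E(\cF)|$ hyperedges each of size $o(n)$.'' But that union is not determined by $f$: different copies through $f$ use different hyperedges of $\mathcal{H}$ to host their other $r$-edges, so the remaining $|V(\cF)|-r$ vertices are not confined to any fixed set of size $o(n)$. Already for $r=2$ and $\cF$ a path on three vertices, the number of copies through a fixed edge $f=v_1v_2$ is the shadow-degree of $v_2$, which can be $\Theta(n)$ even when every hyperedge has size $o(n)$. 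Worse, the outer factor ``number of blue $r$-edges'' is only the trivial $O(n^r)$ (every $r$-set could be blue), so even with a correct inner bound of $O(n^{|V(\cF)|-r})$ you would only get $O(n^{|V(\cF)|})$, not $o(n^{|V(\cF)|})$: anchoring at a blue $r$-edge pins just $r$ vertices and yields no saving. (Your first scheme, bounding copies inside a single $\Gamma^{(r)}(h)$ by $\sum_h|h|^{|V(\cF)|}$, is fine, but it covers only part of the copies.)

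The missing idea is the one the paper uses. Since $\mathcal{H}$ is Berge-$\cF$-free, for any copy of $\cF$ in $\Gamma^{(r)}(\mathcal{H})$ the sets $\{h\in E(\mathcal{H}):g\subseteq h\}$, indexed by the $r$-edges $g$ of the copy, admit no system of distinct representatives; by Hall's theorem and pigeonhole, some single hyperedge $h$ contains two distinct $r$-edges of the copy, hence at least $r+1$ of its vertices. Pinning $r+1$ vertices (one more than a blue $r$-edge gives you) into one hyperedge of size $o(n)$ is exactly the source of the saving: the number of copies is at most $\sum_{h}\binom{|h|}{r+1}\,n^{|V(\cF)|-r-1}\,|V(\cF)|!\le o(n)\cdot\sum_h|h|^r\cdot n^{|V(\cF)|-r-1}\cdot|V(\cF)|!=o(n)\cdot O(n^r)\cdot n^{|V(\cF)|-r-1}\cdot|V(\cF)|!=o(n^{|V(\cF)|})$, using \eqref{part1} and $|h|=o(n)$. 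Note that blueness and the Ramsey threshold $R^{(r)}(\cF,\cF)$ play no role in this claim; they are needed only later in the proof of the theorem.
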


\begin{proof}[Proof of Claim~\ref{claim2}]
Since $\mathcal{H}$ is Berge-$\cF$-free, any copy of $\cF$ in $\Gamma^{(r)}(\mathcal{H})$ has at least two $r$-edges contained in a common hyperedge $h$ of $\mathcal{H}$. Thus at least $r+1$ vertices of $\cF$ are contained in $h$ and $|h| \geq r+1$. 
Now let us estimate the number of vertex sets that can span a copy of $cF$ as follows.
First pick a hyperedge $h$ of $\cH$, then pick $r+1$ vertices in $h$, and then pick $|V(\cF)|-r-1$ other vertices. 
Now, there are at most $|V(\cF)|!$ copies of $\cF$ with the same vertex set. Thus
\begin{align*}
|\{ {\text {copies of } \cF \text{  in  }} \Gamma^{(r)}(\mathcal{H}) \}| & \leq \sum_{h \in E(\mathcal{H})} 
\binom{|h|}{r+1} n^{|V(\cF)|-r-1}|V(\cF)| ! \\
& \leq \sum_{h \in E(\mathcal{H})} 
{ |h|^{r+1}} n^{|V(\cF)|-r-1}|V(\cF)| ! \\
& = |V(\cF)|!n^{|V(\cF)|-r-1}\sum_{h \in E(\mathcal{H})} |h||h|^{r} 
\\
& =  |V(\cF)|!n^{|V(\cF)|-r-1}\cdot o(n)\sum_{h \in E(\mathcal{H})} |h|^r 
\\
& = |V(\cF)|!n^{|V(\cF)|-r-1}\cdot o(n) \cdot O(n^r)
= o (n^{|V(\cF)|}). 
\end{align*}

Here we used the assumption $|h| = o(n)$ as well as (\ref{part1}).
\end{proof}

By the hypergraph removal lemma and Claim~\ref{claim2}, there exists a collection $\mathcal{R}$ of $o(n^r)$ $r$-edges such that each copy of $\cF$ in $\Gamma^{(r)}(\mathcal{H})$ contains at least one $r$-edge from $\mathcal{R}$.
Let $\cR' \subseteq \cR$ be the $r$-edges in $\cR$ that are blue, i.e., that are contained in at most $|E(\cF)|-1$ hyperedges of $\cH$.
We now show that for any hyperedge $h \in E(\mathcal{H})$, that every subset $S\subseteq h$ of size $R^{(r)}(\cF,\cF)$ contains an $r$-edge from $\mathcal{R}'$.
Suppose otherwise, then every $r$-edge in $\Gamma^{(r)}(S) \cap \mathcal{R}$ is not blue, i.e.,  is in at least $|E(\cF)|$ hyperedges of $\cH$. 

Let us color the $r$-edges in $\Gamma^{(r)}(S) \backslash \mathcal{R} $ red and the $r$-edges in $\Gamma^{(r)}(S) \cap \mathcal{R}$ green to get a $2$-coloring of the $r$-edges of $\Gamma^{(r)}(S)$. Then there is a monochromatic copy of $\cF$ in $\Gamma^{(r)}(S)$, as $|S|=R^{(r)}(\cF,\cF)$. 
Every copy of $\cF$ in $\Gamma^{(r)}(\mathcal{H})$ has a hyperedge in $\cR$, so this monochromatic copy cannot be red; hence it is green. Thus every hyperedge of this copy of $\cF$ is contained in at least $|E(\cF)|$ hyperedges of $\cH$. This implies that we can find a Berge-$\cF$ in $\mathcal{H}$ by choosing, for each $r$-edge $f$ of $\cF$ a unique hyperedge of $\cH$ that contains $f$; a contradiction.
Therefore, for any subset $S$ of size $R^{(r)}(\cF,\cF)$ in a hyperedge $h \in E(\mathcal{H})$ contains an $r$-edge from $\mathcal{R}'$.

\begin{clm}\label{claim3}
There exists a constant $ \beta = \beta(r,|V(\cF)|) < 1 $ such that for any hyperedge $h \in E(\mathcal{H})$, the number of $r$-edges of $\cR'$ in $\Gamma^{(r)}(h)$ is at least $(1-\beta)\binom{|h|}{r}$.
\end{clm}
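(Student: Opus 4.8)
The plan is to reduce Claim~\ref{claim3} to Claim~\ref{claim1} essentially verbatim, using the hereditary (Tur\'an-type) argument already set up. We have just shown that every subset $S\subseteq h$ of size $R^{(r)}(\cF,\cF)$ contains an $r$-edge from $\cR'$. Since $R^{(r)}(\cF,\cF)\ge|V(\cF)|$, this already forbids a complete $\cK^{(r)}_{R^{(r)}(\cF,\cF)}$ among the $r$-edges of $\Gamma^{(r)}(h)$ lying \emph{outside} $\cR'$. Concretely, consider the subhypergraph $\cG_h$ of $\Gamma^{(r)}(h)$ formed by those $r$-edges of $\Gamma^{(r)}(h)$ that do not belong to $\cR'$. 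By the displayed conclusion above, $\cG_h$ has no clique on $R^{(r)}(\cF,\cF)$ vertices, i.e., $\cG_h$ is $\cK^{(r)}_{s}$-free for $s\colonequals R^{(r)}(\cF,\cF)$.

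Now I would apply de Caen's bound (Theorem~\ref{dec}), in the uniform corollary form stated after it: for $|h|\ge s$ there is a constant $\beta=\beta(r,s)=\beta(r,|V(\cF)|)<1$ with $\ex_r(|h|,\cK^{(r)}_s)\le\beta\binom{|h|}{r}$; for $r=2$ this is Tur\'an's theorem. Hence the number of $r$-edges of $\Gamma^{(r)}(h)$ not in $\cR'$ is at most $\beta\binom{|h|}{r}$, so the number of $r$-edges of $\cR'$ inside $\Gamma^{(r)}(h)$ is at least $(1-\beta)\binom{|h|}{r}$, which is exactly the claim. One small bookkeeping point: the corollary to Theorem~\ref{dec} requires $s>r$, i.e.\ $R^{(r)}(\cF,\cF)>r$; this holds since any $r$-graph $\cF$ with at least one edge has $|V(\cF)|\ge r+1$ once $|E(\cF)|\ge 2$ (and if $\cF$ has a single edge the statement is trivial or can be handled directly), and $R^{(r)}(\cF,\cF)\ge|V(\cF)|$.

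There is essentially no obstacle here — the hard work was done in establishing the covering statement ``every $S\subseteq h$ of size $R^{(r)}(\cF,\cF)$ contains an $r$-edge of $\cR'$,'' which combined the hypergraph removal lemma with the Ramsey argument. The only thing to be careful about is that $\beta$ depends only on $r$ and $|V(\cF)|$ (through $s=R^{(r)}(\cF,\cF)$, which is a function of $\cF$ alone), and not on $n$ or on $h$, so that the subsequent summation over $h\in E(\cH)$ goes through uniformly. After Claim~\ref{claim3} one finishes as in the first part: summing $(1-\beta)\binom{|h|}{r}\le|\cR'|\cdot(|E(\cF)|-1)=o(n^r)$ over all $h$ gives $\sum_{h}|h|^r=o(n^r)$.
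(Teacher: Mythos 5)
Your proof is correct and follows essentially the same route as the paper: the complement of $\cR'$ inside $\Gamma^{(r)}(h)$ is $\cK^{(r)}_{s}$-free for $s=R^{(r)}(\cF,\cF)$ by the covering statement just established, and de Caen's bound (Theorem~\ref{dec}, or Tur\'an's theorem when $r=2$) then gives the constant $\beta<1$. Your extra remarks on $s>r$ and on the uniformity of $\beta$ in $h$ are correct but not needed beyond what the paper already implicitly uses.
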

\begin{proof}[Proof of Claim~\ref{claim3}]
Fix a hyperedge $h \in E(\cH)$ and a subset $S \subseteq h$ of size $R^{(r)}(\cF,\cF)$. By the argument above, $\Gamma^{(r)}(S)$ contains an $r$-edge from $\cR'$. Therefore, $\Gamma^{(r)}(h) \backslash \mathcal{R}'$ does not contain a clique of size $R^{(r)}(\cF,\cF)$.
So, by Theorem~\ref{dec}, there is a constant $\beta = \beta(r,|V(\cF)|) < 1$ such that $\Gamma^{(r)}(h) \backslash \mathcal{R}'$ contains at most $\beta \binom{|h|}{r}$ $r$-edges. Thus $\Gamma^{(r)}(h) \cap \mathcal{R}'$ contains at least $(1-\beta) \binom{|h|}{r}$ $r$-edges.
\end{proof}

By Claim~\ref{claim3},
\begin{align*}
\sum_{h \in E(\mathcal{H})} (1-\beta) \binom{|h|}{r} & \leq \sum_{h \in E(\mathcal{H})}\left( \Gamma^{(r)}(h) \cap \mathcal{R}' \right) \\ & \leq |\mathcal{R}'| \ (|E(\cF)|-1) \\
& \leq |\mathcal{R}| \ (|E(\cF)|-1) = o(n^r). \\
\end{align*}
 This implies that
\[
\sum_{h\in E(\mathcal{H})} |h|^r = o(n^r),
\]
which completes the proof of Theorem~\ref{thm1}.
\end{proof}

We finish this section with the proof of Corollary~\ref{cor1} (which immediately implies Corollary \ref{cor2}).

\begin{proof}[Proof of Corollary \ref{cor1}] Let us partition $\cH$ into $\cH_1\cup\cH_2$, where $\cH_1$ consists of the hyperedges of size at most $\sqrt{n}$ and $\cH_2$ consists of the hyperedges of size greater than $\sqrt{n}$. Then $\sum_{h\in E(\cH_1)}w(|h|)=o(n^r)$ by Theorem \ref{thm1}, using the assumption that every hyperedge of $\cH_1$ has size at most $\sqrt{n}=o(n)$ and at least $R^{(r)}(\cF,\cF)$.

On the other hand, as  $w(m) \leq o(m^r)$ we have
\[
\sum_{h\in E(\cH_2)}w(|h|)\le \sum_{h\in E(\cH_2)}o(|h|^r)=o\left(\sum_{h\in E(\cH_2)}|h|^r\right)=o(n^r).
\]
Adding these two bounds completes the proof.
\end{proof}

\section{Graph based hypergraphs}

In this section we consider the problem of estimating the maximum number of hyperedges in a Berge-$\cF$-free $k$-graph if the $r$-graph $\cF$ itself is based on a (hyper)graph. First we observe that it is possible to extend a hypergraph $\cF$ to a Berge$_k$-$\cF$ in multiple steps.

\begin{observation}
 For an $r$-graph $\cF$ and $k\ge l\ge r$ we have that the family of hypergraphs in Berge$_k$-$\cF$ is isomorphic to the family of hypergraphs in Berge$_k$-Berge$_l$-$\cF$.
\end{observation}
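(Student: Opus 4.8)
The plan is to show that the two families actually coincide (so, in particular, are isomorphic) by proving the two inclusions separately.

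One inclusion is immediate. Suppose $\cH$ is Berge$_k$-Berge$_l$-$\cF$, say $\cH$ is Berge$_k$-$\cG$ for an $l$-graph $\cG$ that is itself Berge$_l$-$\cF$, with witnessing bijections $q\colon E(\cG)\to E(\cH)$ and $p\colon E(\cF)\to E(\cG)$ satisfying $g\subseteq q(g)$ and $h\subseteq p(h)$. Then $q\circ p\colon E(\cF)\to E(\cH)$ is a bijection with $h\subseteq p(h)\subseteq q(p(h))$ for each $h\in E(\cF)$, and $V(\cF)\subseteq V(\cG)\subseteq V(\cH)$, so $\cH$ is Berge$_k$-$\cF$. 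Hence the whole content lies in the reverse inclusion.

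For the reverse inclusion, let $\cH$ be Berge$_k$-$\cF$ with witnessing bijection $f\colon E(\cF)\to E(\cH)$, $h\subseteq f(h)$. For each $h\in E(\cF)$ I would pick an $l$-set $g_h$ with $h\subseteq g_h\subseteq f(h)$, which is possible because $|h|=r\le l\le k=|f(h)|$. Assuming the sets $g_h$ can be chosen pairwise distinct, let $\cG$ be the $l$-graph with vertex set $V(\cF)\cup\bigcup_{h}g_h$ (contained in $V(\cH)$ since each $g_h\subseteq f(h)$ and $V(\cF)\subseteq V(\cH)$) and edge set $\{g_h:h\in E(\cF)\}$. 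Then $h\mapsto g_h$ is a bijection $E(\cF)\to E(\cG)$ with $h\subseteq g_h$, so $\cG$ is Berge$_l$-$\cF$; and $g_h\mapsto f(h)$ is a bijection $E(\cG)\to E(\cH)$ with $g_h\subseteq f(h)$, so $\cH$ is Berge$_k$-$\cG$. Thus $\cH$ is Berge$_k$-Berge$_l$-$\cF$.

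The point requiring care, and the main obstacle, is choosing the $g_h$ pairwise distinct. This is a system-of-distinct-representatives problem for the family $\{A_h\}_{h\in E(\cF)}$ with $A_h=\{g:\ h\subseteq g\subseteq f(h),\ |g|=l\}$, so by Hall's theorem it suffices to prove $\bigl|\bigcup_{h\in\cF'}A_h\bigr|\ge|\cF'|$ for every $\cF'\subseteq E(\cF)$; this is exactly where one uses that $E(\cF)$ consists of distinct $r$-sets and $E(\cH)$ of distinct $k$-sets. The extreme cases $l=r$ and $l=k$ are trivial, since then $g_h=h$ (resp.\ $g_h=f(h)$) is forced and the $g_h$ are automatically distinct. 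In general, a fixed $l$-set $g$ lies in at most $\binom{l}{r}$ of the $A_h$ — the witnessing $h$ must be one of the $\binom{l}{r}$ $r$-subsets of $g$, and each $r$-set is an edge of $\cF$ for at most one $h$ — while $|A_h|=\binom{k-r}{l-r}$ for every $h$, so a double count gives $\bigl|\bigcup_{h\in\cF'}A_h\bigr|\ge|\cF'|\binom{k-r}{l-r}\big/\binom{l}{r}$, which already settles the marriage condition when $\binom{k-r}{l-r}\ge\binom{l}{r}$. For the remaining parameter range I would sharpen the count by additionally exploiting the distinctness of the sets $f(h)$ (an $l$-set $g$ extends to the $k$-set $f(h)$ only by adjoining a $(k-l)$-set, and these adjoined sets must be distinct across the relevant $h$), or else verify the marriage condition directly by an exchange/augmenting-path argument. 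I expect this verification to be the only substantive part of the proof.
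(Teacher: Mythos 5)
Your easy direction (composing the two witnessing bijections) is correct, and you rightly identify the crux of the converse: the chosen $l$-sets $g_h$ must be pairwise distinct, which for a \emph{fixed} witnessing bijection $f$ is a system-of-distinct-representatives problem for the sets $A_h=\{g: h\subseteq g\subseteq f(h),\ |g|=l\}$. The gap is that the Hall condition you propose to verify is false in general, so no sharpening of the double count and no augmenting-path search will close it. Concretely, take $r=2$, $l=3$, $k=4$ (writing $abc$ for $\{a,b,c\}$, etc.): let $\cF$ be the graph with edges $12,13,14,15,23,36,45,46$ and let $f$ send these, in order, to the distinct $4$-sets $1234$, $1235$, $1245$, $1345$, $1236$, $2346$, $1456$, $3456$; this is a legitimate witnessing bijection. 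Then $A_{12}=\{123,124\}$, $A_{13}=\{123,135\}$, $A_{14}=\{124,145\}$, $A_{15}=\{135,145\}$, $A_{23}=\{123,236\}$, $A_{36}=\{236,346\}$, $A_{45}=\{145,456\}$, $A_{46}=\{346,456\}$: eight sets whose union has only seven elements, so Hall's condition fails and no SDR exists. (This is consistent with your own estimate, since here $\binom{k-r}{l-r}=2<3=\binom{l}{r}$; the failure occurs already in the smallest nontrivial range $r<l<k$.)

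The observation itself is not refuted by this example: for this $\cF$ and this $\cH$ the $3$-graph $\cG=\{126,125,134,146,234,245,246,356\}$ is a Berge$_3$-$\cF$ with $\cH$ a Berge$_4$-$\cG$ (e.g.\ $12\subseteq 126\subseteq 1236$, $15\subseteq 125\subseteq 1235$, $13\subseteq 134\subseteq 1345$, $14\subseteq 146\subseteq 1456$, $23\subseteq 234\subseteq 1234$, $45\subseteq 245\subseteq 1245$, $46\subseteq 246\subseteq 2346$, $36\subseteq 356\subseteq 3456$) --- but the composed bijection $E(\cF)\to E(\cH)$ this induces is \emph{not} the $f$ above. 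So the intermediate $l$-sets and the witnessing bijection must be chosen simultaneously: what is really needed is a family of distinct $l$-sets $g_h$ together with distinct hyperedges $H_h\in E(\cH)$ satisfying $h\subseteq g_h\subseteq H_h$, a rainbow-SDR/three-layer matching problem for which Hall's theorem applied to a single fixed $f$ is not the right tool. Your proposal, which fixes an arbitrary $f$ at the outset, therefore leaves the converse inclusion unproved, and the ``only substantive part'' you defer is not merely unverified but, as stated, false.
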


This simple observation implies bounds when $\cF$ is one particular Berge copy of a graph $F_0$. 
An obvious choice is the \textit{expansion} $F_0^{+r}$ of $F_0$. Recall that $F_0^{+r}$ is obtained by adding $r-2$ new vertices to each edge of $F_0$ such that these new vertices are distinct for distinct edges (thus all the intersections of the hyperedges are inherited from the graph). Tur\'an problems for expansions have been widely studied; see \cite{mubver} for a survey. 

Obviously, for $k \leq r$, the hypergraph $F_0^{+r}$ is a Berge copy of $F_0^{+k}$. On the other hand, every Berge$_r$-$F_0^{+k}$ is a Berge copy of $F_0$. Thus 

\begin{proposition}\label{expan}
Let $F_0$ be a fixed graph and let $k \leq r$.
Then
\[
\ex_r(n,\textup{Berge}_r\text{-}F_0)\le \ex_r(n,\textup{Berge}_r\text{-}F_0^{+k})\le \ex_r(n,F_0^{+r}).
\]
\end{proposition}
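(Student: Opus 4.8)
The plan is to derive both inequalities of the chain from one elementary monotonicity principle, fed by the two structural facts recorded just above the statement. The principle is: if $\cA$ and $\cB$ are families of $r$-graphs such that every member of $\cB$ contains some member of $\cA$ as a sub-hypergraph, then $\ex_r(n,\cA)\le\ex_r(n,\cB)$. I would prove this one line first, by contraposition on the free classes: if an $r$-graph $\cG$ were $\cA$-free but contained some $B\in\cB$, then the member of $\cA$ guaranteed inside $B$ would also sit inside $\cG$, a contradiction; hence every $\cA$-free $r$-graph is $\cB$-free, and maximizing the number of hyperedges over the smaller class can only give a smaller value. Both inequalities are then instances of this principle, and the whole content is in verifying the two required containments with the correct roles of $\cA$ and $\cB$.

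For the left inequality I take $\cA=\textup{Berge}_r\text{-}F_0$ and $\cB=\textup{Berge}_r\text{-}F_0^{+k}$. The containment I need is precisely the observation that every Berge$_r$-$F_0^{+k}$ is itself a Berge copy of $F_0$: given a bijection witnessing a Berge$_r$ copy of $F_0^{+k}$, each $k$-edge of $F_0^{+k}$ still carries the two original endpoints of its underlying $F_0$-edge, and these two endpoints lie in the $r$-edge assigned to it, so the same assignment realizes $F_0$. Thus each member of $\cB$ is a member of $\cA$ (and so trivially contains one), and the principle yields $\ex_r(n,\textup{Berge}_r\text{-}F_0)\le\ex_r(n,\textup{Berge}_r\text{-}F_0^{+k})$.

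For the right inequality I take $\cA=\textup{Berge}_r\text{-}F_0^{+k}$ and $\cB=\{F_0^{+r}\}$. Here the containment is the observation that, since $k\le r$, the $r$-graph $F_0^{+r}$ is a Berge copy of $F_0^{+k}$: identify the two endpoints together with $k-2$ of the $r-2$ added vertices of each $r$-edge of $F_0^{+r}$ with the matching $k$-edge of $F_0^{+k}$, which is possible exactly because $k-2\le r-2$. As $F_0^{+r}$ is $r$-uniform, it is therefore a Berge$_r$-$F_0^{+k}$, i.e. a member of $\cA$; so the single member of $\cB$ contains a member of $\cA$, and the principle gives $\ex_r(n,\textup{Berge}_r\text{-}F_0^{+k})\le\ex_r(n,F_0^{+r})$. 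Concatenating the two bounds produces the stated chain.

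There is essentially no computation; the sole hazard is keeping the direction of each inequality straight, since verifying the two containments in the wrong order would prove the reversed bounds. My safeguard is the mnemonic that a \emph{more special} forbidden structure is easier to avoid, so forbidding it permits more hyperedges: a Berge$_r$-$F_0^{+k}$ is a special Berge$_r$-$F_0$, and $F_0^{+r}$ is a special Berge$_r$-$F_0^{+k}$, so each step moves from a smaller extremal number to a larger one, exactly as displayed. I will also flag that the hypothesis $k\le r$ is used only in the second containment, namely to fit the $k$-edges of $F_0^{+k}$ inside the $r$-edges of $F_0^{+r}$.
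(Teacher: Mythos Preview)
Your proposal is correct and follows exactly the approach the paper takes: the two sentences preceding the proposition in the paper are precisely your two containments (every Berge$_r$-$F_0^{+k}$ is a Berge copy of $F_0$, and $F_0^{+r}$ is a Berge copy of $F_0^{+k}$), and you have simply made the underlying monotonicity principle and the verification of the containments explicit. Your care about the direction of the inequalities and the location of the hypothesis $k\le r$ is well placed and matches the paper's reasoning.
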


In particular, for $F_0=K_m$, with $m>r$ and for $n$ large enough, we know that the upper and lower bounds in the above proposition coincide, using a result of Pikhurko \cite{pikhu} and the following construction. Let us partition an $n$-element set $V$ into $m-1$ parts such that the difference between the size of two parts is at most 1 and let the hyperedges be the $r$-sets that intersect every part in at most one vertex. This $r$-uniform hypergraph is called the \textit{Tur\'an hypergraph} and is denoted by $T^r(n,m-1)$. It is Berge-$K_m$-free, as a $K_m$ would have two vertices $u$ and $v$ in the same part by the pigeonhole principle, but there is no hyperedge containing both $u$ and $v$.

Moreover, for any graph with chromatic number $m>r$, the upper bound is asymptotically the same as the lower bound (given by $T^r(n,m-1)$) in the above proposition due to a result of Palmer, Tait, Timmons, and Wagner \cite{pttw}. 

\begin{corollary}\label{expacor}
Let $F_0$ be a fixed graph and let $m=\chi(F_0)>r>k$. Then 
\[
 \ex_r(n,\textup{Berge}_r\text{-}F_0^{+k})=(1+o(1))|E(T^r(n,m-1)|.
\]
\end{corollary}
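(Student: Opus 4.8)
The plan is to combine Proposition~\ref{expan} with two known extremal results to sandwich $\ex_r(n,\textup{Berge}_r\text{-}F_0^{+k})$ between quantities that are asymptotically equal to $|E(T^r(n,m-1))|$. First I would invoke Proposition~\ref{expan} with the hypothesis $r>k$: this yields
\[
\ex_r(n,\textup{Berge}_r\text{-}F_0)\le \ex_r(n,\textup{Berge}_r\text{-}F_0^{+k})\le \ex_r(n,F_0^{+r}).
\]
So it suffices to show that both the leftmost and rightmost terms are $(1+o(1))|E(T^r(n,m-1))|$ under the assumption $m=\chi(F_0)>r$.

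For the upper bound I would use the result of Palmer, Tait, Timmons, and Wagner~\cite{pttw} on Tur\'an numbers of expansions: for a graph $F_0$ with $\chi(F_0)=m>r$ one has $\ex_r(n,F_0^{+r})=(1+o(1))|E(T^r(n,m-1))|$. (The Tur\'an hypergraph $T^r(n,m-1)$ is $K_m$-free in the Berge sense, hence $F_0$-free in the Berge sense, and also contains no $F_0^{+r}$, so it serves as the common lower-bound construction; the content of~\cite{pttw} is the matching upper bound, which follows from a supersaturation/stability argument reducing to the chromatic threshold.) For the lower bound on $\ex_r(n,\textup{Berge}_r\text{-}F_0)$, the same Tur\'an hypergraph $T^r(n,m-1)$ is Berge-$K_m$-free, and since $\chi(F_0)=m$ means $F_0\subseteq K_m$, any Berge copy of $F_0$ inside $T^r(n,m-1)$ would extend to a Berge copy of $K_m$ — so $T^r(n,m-1)$ is Berge-$F_0$-free, giving $\ex_r(n,\textup{Berge}_r\text{-}F_0)\ge |E(T^r(n,m-1))|$. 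Sandwiching, all three quantities are $(1+o(1))|E(T^r(n,m-1))|$, which is exactly the claim.

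The main obstacle — or rather the main thing to be careful about — is making sure the chromatic-number condition is used in the right direction at each end. At the upper end we need $\chi(F_0)>r$ precisely so that~\cite{pttw} applies and pins $\ex_r(n,F_0^{+r})$ to the Tur\'an hypergraph value; at the lower end we only need that $T^r(n,m-1)$ avoids Berge-$F_0$, which follows from $F_0\subseteq K_{\chi(F_0)}$ together with the pigeonhole observation (two vertices of a putative $K_m$ lie in a common part, and no $r$-edge of $T^r(n,m-1)$ contains both). A secondary point is verifying that Berge$_r$-$F_0$ really is being avoided and not some weaker notion: since a Berge copy of $F_0$ requires, for each edge $uv$ of $F_0$, a distinct $r$-edge containing $\{u,v\}$, and in $T^r(n,m-1)$ no $r$-edge contains two vertices from the same part, a Berge-$F_0$ would force a proper $(m-1)$-coloring of $F_0$, contradicting $\chi(F_0)=m$. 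Both directions are essentially citations plus a one-line construction check, so no heavy computation is needed; the write-up is short once the direction of each inequality is fixed.
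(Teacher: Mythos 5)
Your proposal is correct and follows essentially the same route as the paper: sandwich $\ex_r(n,\textup{Berge}_r\text{-}F_0^{+k})$ via Proposition~\ref{expan}, cite the Palmer--Tait--Timmons--Wagner result~\cite{pttw} for the upper end, and use the Tur\'an hypergraph $T^r(n,m-1)$ as the matching lower-bound construction. One small caution: your first justification of the lower bound (that $\chi(F_0)=m$ means $F_0\subseteq K_m$) is false in general, but the corrected argument you give afterwards---that a Berge copy of $F_0$ in $T^r(n,m-1)$ would induce a proper $(m-1)$-coloring of $F_0$---is exactly the right one and matches the paper's pigeonhole reasoning.
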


However, note that the bounds in Proposition \ref{expan} are far from each other for any given $F_0$, if $r$ is large enough. Indeed, $\ex_r(n,\textup{Berge}_r\text{-}F_0)=O(n^2)$ by a result of Gerbner and Palmer~\cite{gp1} while $\ex_r(n,F_0^{+r})=\Omega(n^{r-1})$ if $F_0$ is not a star (by having all the $r$-sets containing a fixed vertex), and  $\ex_r(n,F_0^{+r})=\Omega(n^{r-2})$ if $F_0$ is a star (by having all the $r$-sets containing two fixed vertices).

Note that similar statements can be obtained for other specific Berge copies of $F_0$. There are several different kinds of graph based hypergraphs (i.e., ways to extend a graph to a hypergraph) studied in \cite{gp}, and each of them results in specific Berge copies of $F_0$.

\section{Generalized hypergraph Tur\'an problems}\label{berge-section}

In this section we will prove Lemma \ref{celebhyp}. Our proof is based on the proof of Lemma \ref{celeb2}.
We use the following lemma from \cite{gerbner} (most of which already appears in \cite{gmp}).

\begin{lemma}\label{cel2} 
Let $G$ be a finite bipartite graph with parts $A$ and $B$ and let $M$ be a maximum matching in $G$. Let $B'$ denote the set of vertices in $B$ that are incident to $M$. Then we can partition $A$ into $A_1$ and $A_2$ and partition $B'$ into $B_1$ and $B_2$ such that the vertices of $A_1$ are joined to the vertices of $B_1$ by edges of $M$ and every neighbor of the vertices of $A_2$ is in $B_2$. Moreover, every vertex in $A_1$ has a neighbor in $B\setminus B'$.
\end{lemma}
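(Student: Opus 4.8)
\textbf{Proof proposal for Lemma~\ref{cel2}.}

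The plan is to build the partition directly from the maximum matching $M$ by a reachability/alternating-path argument, in the spirit of the standard proof of K\"onig's theorem. First I would set up notation: write $A_1$ for the set of vertices of $A$ that are matched by $M$ and $A_2 = A \setminus A_1$ for the $M$-unsaturated vertices of $A$. Correspondingly let $B_1 \subseteq B'$ be the set of $M$-partners of the vertices in $A_1$; since $M$ is a matching, the restriction of $M$ to $A_1 \cup B_1$ is a perfect matching between these two sets, so $A_1$ and $B_1$ are joined by edges of $M$ exactly as required. It remains to partition $B'$ correctly — i.e.\ to check $B_1 \subseteq B'$, which is immediate since every vertex of $B_1$ is incident to an edge of $M$ — and to verify the two reachability conditions: every neighbor of a vertex in $A_2$ lies in $B_2 := B' \setminus B_1$, and every vertex of $A_1$ has a neighbor in $B \setminus B'$.

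For the first reachability condition, suppose $a \in A_2$ has a neighbor $b \in B$. Since $a$ is $M$-unsaturated, the single edge $ab$ is an $M$-augmenting path unless $b$ is $M$-saturated; maximality of $M$ forces $b$ to be incident to $M$, so $b \in B'$. I then need $b \notin B_1$, i.e.\ $b \in B_2$. If instead $b \in B_1$, say $b$ is matched by $M$ to some $a' \in A_1$, then $a b a'$ together with... actually here the cleaner route is the alternating-forest construction: define $A_1$ and $B_1$ not merely as the matched sets but as the vertices reachable from $A_2$ by $M$-alternating paths, and then run the classical argument. Concretely, let $B_1$ be the set of vertices $b \in B'$ reachable from some vertex of $A_2$ via an alternating path starting with a non-matching edge, and let $A_1$ be the set of their $M$-partners; one shows $A_1 \cup B_1$ is exactly the set of vertices reachable from $A_2$ along alternating paths, that $M$ restricted to $A_1 \cup B_1$ is a perfect matching (no augmenting path by maximality), and that any neighbor of $A_2 \cup A_1$ lies in $B_1$ — but then $B_1$ as I first defined it (partners of matched $A$-vertices) would be \emph{all} of $B'$, which is too strong. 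So the correct reading of the lemma must be: $A_1$ is the matched part of $A$ satisfying the ``has a neighbor outside $B'$'' clause, and the construction proceeds by removing from $A$ and $B'$ a maximal ``closed'' piece.

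The obstacle, and the step to get right, is the direction of the reachability. Here is the construction I would actually carry out. Run the alternating-forest search from the $M$-unsaturated vertices of $B'$... but $B'$ has none by definition. So instead: by maximality of $M$, $G$ has no $M$-augmenting path, hence (K\"onig-type analysis) the matched vertices split according to whether the alternating search from $A_2$ reaches them. Let $U \subseteq A \cup B$ be the set of vertices reachable from $A_2$ by $M$-alternating paths (starting with a non-matching edge). Put $A_2' = U \cap A \supseteq A_2$ and $B_2 = U \cap B$; note $B_2 \subseteq B'$ since an unsaturated vertex of $B$ would give an augmenting path. Set $A_1 = A \setminus A_2'$, $B_1 = B' \setminus B_2$. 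Then: (i) every edge of $M$ joins $A_2'$ to $B_2$ or $A_1$ to $B_1$ — because following a matching edge from a reachable vertex keeps you reachable; so $A_1$ is matched into $B_1$ and, counting, this restricted matching is perfect, giving the first clause. (ii) Every neighbor of $A_2' \supseteq A_2$ lies in $U \cap B = B_2$ — by definition of reachability and absence of augmenting paths — giving the second clause with $A_2$ replaced by the larger $A_2'$; rename $A_2'$ as $A_2$. (iii) Every vertex $a \in A_1$: since $a \notin U$, $a$ is matched, say to $b' \in B_1$; I claim $a$ has a neighbor in $B \setminus B'$. If not, all neighbors of $a$ lie in $B'$; one then shows (again by an alternating-path exchange, using that $a$ is not reachable) that $M$ was not maximum or that $a$ should have been reachable — this is the one spot needing care, and I expect it to hinge on choosing $M$ to be not just maximum but, among maximum matchings, one minimizing the number of matched vertices of $A$ with all neighbors inside $B'$, or on a direct augmenting-path contradiction. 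Once (iii) is pinned down, assembling (i)--(iii) finishes the lemma.
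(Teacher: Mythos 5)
The paper never proves Lemma~\ref{cel2} --- it imports it from \cite{gerbner} and \cite{gmp} --- so your argument has to stand on its own, and it does not. The K\"onig-type decomposition you settle on (let $U$ be the set of vertices reachable by $M$-alternating paths from the unsaturated part of $A$, and set $A_1=A\setminus U$, $B_2=U\cap B$, $B_1=B'\setminus B_2$) does give clauses (i) and (ii), but clause (iii) is not merely ``the one spot needing care'': it is \emph{false} for this partition. Take $G$ to be a single edge $ab$ with $M=\{ab\}$. There are no unsaturated vertices in $A$, so $U=\emptyset$, your $A_1=\{a\}$, and $a$ has no neighbor in $B\setminus B'=\emptyset$; the partition the lemma actually requires here is $A_1=\emptyset$, $A_2=\{a\}$, $B_2=\{b\}$. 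The structural point you are missing is that the partition of $A$ is \emph{forced} by the statement: clause (iii) together with $N(A_2)\subseteq B_2\subseteq B'$ forces $A_1=\{a\in A:\ N(a)\not\subseteq B'\}$ and $A_2=\{a\in A:\ N(a)\subseteq B'\}$, which in general has nothing to do with reachability from the unsaturated part of $A$. With that forced choice, the easy half is that each $a\in A_1$ is $M$-saturated (otherwise adding the edge to an unsaturated neighbor augments $M$); the entire remaining content of the lemma is that $M(A_1)$ can be made disjoint from $N(A_2)$, so that $B_1:=M(A_1)$ and $B_2:=B'\setminus B_1$ work. Your proposal never engages with this.

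Moreover, that remaining step cannot be carried out for an \emph{arbitrary} maximum matching, so the framework ``fix any maximum $M$, then build the partition'' is unrepairable as it stands. Let $A=\{a_1,a_2\}$, $B=\{b_1,b_2,b_3\}$ with edges $a_1b_1,a_1b_3,a_2b_1,a_2b_2$, and take the maximum matching $M=\{a_1b_1,a_2b_2\}$, so $B'=\{b_1,b_2\}$. The forced partition is $A_1=\{a_1\}$ (its neighbor $b_3$ is unsaturated) and $A_2=\{a_2\}$; then $b_1=M(a_1)$ must lie in $B_1$ while $b_1\in N(a_2)$ must lie in $B_2$, so no valid partition of $B'$ exists for this $M$ (whereas the maximum matching $\{a_1b_3,a_2b_2\}$ does admit one). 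Any correct proof therefore has to make a careful extremal choice or exchange modification of the maximum matching --- roughly in the direction of your parenthetical suggestion of minimizing the number of vertices of $A$ with all neighbors inside $B'$ --- or construct the red and blue classes by a different greedy process as in \cite{gmp}. As written, your proposal proves only the standard K\"onig decomposition and leaves the actual content of the lemma, clause (iii) together with the disjointness of $M(A_1)$ from $N(A_2)$, unproven.
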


\begin{proof}[Proof of Lemma \ref{celebhyp}] Let $\cH_0$ be a Berge-$\cF$-free $r$-graph on $n$ vertices with the largest number of $r$-edges and consider the following auxiliary bipartite graph $G$. Part $A$ consists of the hyperedges of $\cH_0$ and part $B$ consists of their $k$-shadows.
A vertex $u\in A$ is adjacent to a vertex $v\in B$ if and only if $v\subseteq u$. Let $M$ be a maximum matching in $G$ and let $B'$ denote the set of vertices in $B$ that are incident to $M$. By Lemma~\ref{cel2}, we can partition $A$ into $A_1$ and $A_2$ and partition $B'$ into $B_1$ and $B_2$ such that the vertices of $A_1$ are joined to the vertices of $B_1$ by edges of $M$ and every neighbor of the vertices of $A_2$ is in $B_2$.
Let $\cH$ be the red-blue $k$-graph with red hyperedges $B_1$ and blue hyperedges $B_2$.  Then
\[
\ex_r(n,\textup{Berge-}\cF)=|E(\cH_0)|=|A_1|+|A_2|\le |B_1|+|A_2|\le |B_1|+N(\cK_r^{(k)},\cH_{blue})=g_r(\cH).
\]
For the first inequality we used that that $M$ joins vertices of $A_1$ to $B_1$ and for the second inequality we used that that for a vertex $v\in A_2$, all its neighbors are in $B_2$, which means all its $k$-subsets are in $B_2$, thus there is a $\cK_r^{(k)}$ on those $r$ vertices.
\end{proof}

\begin{corollary}\label{hypceleb} For any $k$-graph $\cF$ and integers $r,n$ we have
\[
\ex_k(n,\cK_r^{(k)},\cF)\le \ex_r(n,\textup{Berge-}\cF)\le \ex_k(n,\cK_r^{(k)},\cF)+\ex_k(n,\cF).
\]
\end{corollary}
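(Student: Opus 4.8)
\textbf{Proof proposal for Corollary~\ref{hypceleb}.}
The plan is to read off both inequalities from Lemma~\ref{celebhyp}, after translating its conclusion into the language of the generalized hypergraph Tur\'an functions.

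For the lower bound $\ex_k(n,\cK_r^{(k)},\cF)\le\ex_r(n,\textup{Berge-}\cF)$, I would argue directly rather than via the lemma. Take a $\cF$-free $k$-graph $\cG$ on $n$ vertices maximizing the number of copies of $\cK_r^{(k)}$, i.e.\ with $N(\cK_r^{(k)},\cG)=\ex_k(n,\cK_r^{(k)},\cF)$. For each copy of $\cK_r^{(k)}$ in $\cG$ (equivalently, each $r$-set all of whose $k$-subsets are edges of $\cG$) take the corresponding $r$-set as an $r$-edge; call the resulting $r$-graph $\cH$ on the same vertex set. Then $|E(\cH)|=N(\cK_r^{(k)},\cG)$, so it suffices to check $\cH$ is Berge-$\cF$-free. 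If $\cH$ contained a Berge-$\cF$, there would be a copy of $\cF$ (on vertex set $V(\cF)\subseteq V(\cH)$) and an injection $h\mapsto f(h)$ assigning to each hyperedge $h$ of $\cF$ a distinct $r$-edge $f(h)\supseteq h$ of $\cH$; since $|h|=k=r$ this forces $f(h)=h$, so every hyperedge of the copy of $\cF$ is an $r$-edge of $\cH$, hence (by construction of $\cH$) its $k$-subsets are all edges of $\cG$. But $|h|=k$ means $h$ itself is its only $k$-subset, so $h\in E(\cG)$; thus $\cG$ contains this copy of $\cF$, contradicting $\cF$-freeness of $\cG$. Hence $\cH$ is Berge-$\cF$-free and $\ex_r(n,\textup{Berge-}\cF)\ge|E(\cH)|=\ex_k(n,\cK_r^{(k)},\cF)$. (I would double-check that the intended meaning here is $k=r$, matching the hypothesis ``$k$-graph $\cF$''; the notation $\cK_r^{(k)}$ then degenerates, and a cleaner phrasing of this half is simply that any $\cF$-free $k$-graph is itself a Berge-$\cF$-free ``$r$-graph'' with $r=k$.)

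For the upper bound, I would invoke Lemma~\ref{celebhyp}: there is a red-blue $\cF$-free $k$-graph $\cH$ on $n$ vertices with $\cH_{red}$ being $\cK_r^{(k)}$-free and $\ex_r(n,\textup{Berge-}\cF)\le g_r(\cH)=|E(\cH_{red})|+N(\cK_r^{(k)},\cH_{blue})$. Now $\cH_{blue}$ is $\cF$-free (being a subhypergraph of the $\cF$-free $\cH$), so $N(\cK_r^{(k)},\cH_{blue})\le\ex_k(n,\cK_r^{(k)},\cF)$. Likewise $\cH_{red}$ is a $\cK_r^{(k)}$-free (hence, if we also want $\cF$-free, note it is a subhypergraph of $\cH$ so it is $\cF$-free as well) $k$-graph on $n$ vertices, so $|E(\cH_{red})|\le\ex_k(n,\cF)$. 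Adding these two bounds gives $\ex_r(n,\textup{Berge-}\cF)\le\ex_k(n,\cK_r^{(k)},\cF)+\ex_k(n,\cF)$, as desired.

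The routine parts are the two subgraph/subhypergraph monotonicity observations ($\cF$-freeness and $\cK_r^{(k)}$-freeness pass to subhypergraphs). The only point that needs genuine care—and the place I'd expect a referee to look—is the degeneracy $k=r$ forced by ``$\cF$ is a $k$-graph'': one must confirm that Lemma~\ref{celebhyp} and its ingredients (the shadow bipartite graph $G$, the matching argument, Lemma~\ref{cel2}) still say something nonvacuous in that regime, and that $g_r(\cH)$ as defined there really does split as $|E(\cH_{red})|+N(\cK_r^{(k)},\cH_{blue})$ with both terms controlled by the two stated Tur\'an-type quantities. Once that alignment of notation is pinned down, the corollary is immediate.
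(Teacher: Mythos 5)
Your upper-bound argument is exactly the paper's: invoke Lemma~\ref{celebhyp}, bound $N(\cK_r^{(k)},\cH_{blue})$ by $\ex_k(n,\cK_r^{(k)},\cF)$ since $\cH_{blue}\subseteq\cH$ is $\cF$-free, and bound $|E(\cH_{red})|$ by $\ex_k(n,\cF)$ for the same reason (the $\cK_r^{(k)}$-freeness of $\cH_{red}$ is not needed for this). That half is correct.

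The genuine problem is your reading of the parameters. You have convinced yourself that ``$\cF$ is a $k$-graph'' forces $k=r$ and that the corollary degenerates; it does not. Here $r$ and $k$ are independent, the intended (and only interesting) regime is $r\ge k$, typically $r>k$: $\cK_r^{(k)}$ is the complete $k$-uniform hypergraph on $r$ vertices, i.e.\ an $r$-set together with all $\binom{r}{k}$ of its $k$-subsets, and a Berge copy of the $k$-graph $\cF$ inside an $r$-graph replaces each $k$-edge $h$ of $\cF$ by a distinct $r$-edge $f(h)\supseteq h$ --- the containment $h\subseteq f(h)$ does not force $|h|=|f(h)|$; that is the whole point of the Berge notion (compare the proof of Lemma~\ref{celebhyp}, where $A$ consists of $r$-edges and $B$ of their $k$-shadows). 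Consequently the step ``since $|h|=k=r$ this forces $f(h)=h$'' in your lower-bound verification is unavailable in general, and as written your argument covers only the trivial case $k=r$, where both extremes of the chain reduce to $\ex_k(n,\cF)$. Your construction is nevertheless the right one, and the one the paper indicates: from an $\cF$-free $k$-graph $\cG$ maximizing $N(\cK_r^{(k)},\cG)$, form the $r$-graph whose $r$-edges are the $r$-sets spanning a $\cK_r^{(k)}$ in $\cG$. The verification in the general case is a one-line repair of yours: if this $r$-graph contained a Berge-$\cF$, each $k$-edge $h$ of $\cF$ would be a $k$-subset of some $r$-edge $f(h)$, and by construction every $k$-subset of $f(h)$ is an edge of $\cG$, so $h\in E(\cG)$ for every $h\in E(\cF)$ and $\cG$ contains a copy of $\cF$, a contradiction. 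With that correction your proof coincides with the paper's.
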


Note that the lower bound is given by replacing each $\cK_r^{(k)}$ in an $\cF$-free $k$-graph by an $r$-edge. Also note that Corollary~\ref{hypceleb} is very useful for graph based Berge hypergraphs, given that there are several results concerning the Tur\'an and the generalized Tur\'an problems. In the hypergraph case, much less is known about the Tur\'an numbers. The situation is even worse for the generalized hypergraph Tur\'an problems, where we are aware of only a few such results due to Ma, Yuan and Zhang~\cite{myz} 
and Xu, Zhang and Ge~\cite{XZG}.
Therefore, Corollary~\ref{hypceleb} is more useful in giving bounds for the 
  generalized hypergraph Tur\'an problem, i.e., $\ex_k(n,\cH,\cF)$. For example, combining Corollary~\ref{expacor} and Corollary~\ref{hypceleb} with the fact that $\ex_k(n,F_0^{+k}) = O(n^k) =o(n^r)$ gives

\begin{corollary}\label{genhyptur} Let $F_0$ be a graph and let $m=\chi(F_0)>r>k$. Then 
\[
\ex_k(n,\cK_r^{(k)},F_0^{+k})=(1+o(1))|E(T^r(n,m-1)|.
\]
\end{corollary}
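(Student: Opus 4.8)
The plan is to combine the two corollaries already available with the trivial upper bound on the hypergraph Turán number of an expansion. Concretely, I would apply Corollary~\ref{hypceleb} to the $k$-graph $\cF=F_0^{+k}$ with the parameter $r$ (noting $r>k$, so $\cK_r^{(k)}$ makes sense and $F_0^{+k}$ is a genuine $k$-graph), obtaining the sandwich
\[
\ex_k(n,\cK_r^{(k)},F_0^{+k})\le \ex_r(n,\textup{Berge-}F_0^{+k})\le \ex_k(n,\cK_r^{(k)},F_0^{+k})+\ex_k(n,F_0^{+k}).
\]
The right-hand error term $\ex_k(n,F_0^{+k})$ is at most $\binom{n}{k}=O(n^k)$ simply because any $k$-graph on $n$ vertices has at most that many hyperedges; since $r>k$ we have $O(n^k)=o(n^r)$, so this term is negligible compared to the main quantities, all of which are $\Theta(n^r)$.

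Next I would invoke Corollary~\ref{expacor}, which under the hypothesis $m=\chi(F_0)>r>k$ gives
\[
\ex_r(n,\textup{Berge}_r\text{-}F_0^{+k})=(1+o(1))\,|E(T^r(n,m-1))|.
\]
Here one must be slightly careful about the distinction between $\ex_r(n,\textup{Berge-}F_0^{+k})$, which asks for arbitrary (non-uniform) Berge-$F_0^{+k}$-free hypergraphs, and $\ex_r(n,\textup{Berge}_r\text{-}F_0^{+k})$, which restricts to $r$-uniform ones. In the proof of Lemma~\ref{celebhyp} we took $\cH_0$ to be an extremal Berge-$\cF$-free $r$-graph, i.e. an $r$-uniform one, so what Lemma~\ref{celebhyp} and Corollary~\ref{hypceleb} actually bound is $\ex_r(n,\textup{Berge}_r\text{-}\cF)$; the two notations should be identified here, and I would make that explicit. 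With that identification, chaining the displays yields
\[
\ex_k(n,\cK_r^{(k)},F_0^{+k})\ \le\ (1+o(1))\,|E(T^r(n,m-1))|\ \le\ \ex_k(n,\cK_r^{(k)},F_0^{+k})+o(n^r),
\]
and since $|E(T^r(n,m-1))|=\Theta(n^r)$ the two outer bounds pinch $\ex_k(n,\cK_r^{(k)},F_0^{+k})$ to $(1+o(1))\,|E(T^r(n,m-1))|$, which is the claim.

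The argument is essentially a bookkeeping assembly of prior results, so there is no deep obstacle; the only genuinely delicate point is verifying the chain of hypotheses lines up. One must check that $F_0^{+k}$ is the correct object to feed into Corollary~\ref{hypceleb} (it is a $k$-graph, and a Berge copy in the $r$-uniform world of $F_0^{+k}$ is, by the Observation and Proposition~\ref{expan}, exactly what Corollary~\ref{expacor} controls), that $r>k$ is what makes the $\ex_k(n,F_0^{+k})=O(n^k)$ term lower-order, and that $m>r$ is what makes $T^r(n,m-1)$ the relevant extremal construction with $\Theta(n^r)$ edges. The potential for the main obstacle lies precisely in the $\textup{Berge}$ versus $\textup{Berge}_r$ notational mismatch: if one does not observe that the extremal hypergraph produced in Lemma~\ref{celebhyp} is uniform, the inequalities would not connect to Corollary~\ref{expacor}. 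Once that is flagged, the rest is immediate.
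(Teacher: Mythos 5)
Your proposal is correct and is essentially identical to the paper's (one-line) argument: apply Corollary~\ref{hypceleb} with $\cF=F_0^{+k}$, use Corollary~\ref{expacor} for the middle term, and absorb $\ex_k(n,F_0^{+k})=O(n^k)=o(n^r)$ against $|E(T^r(n,m-1))|=\Theta(n^r)$. Your remark on the Berge versus Berge$_r$ notation is a fair clarification, though the paper's definition of $\ex_r(n,\cdot)$ already restricts the host to $r$-uniform hypergraphs, so the two quantities coincide by definition.
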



\section{Concluding remarks}

Let us note that one can define Berge copies of other discrete structures in a similar manner to hypergraphs. Assume that $\cF$ is a subset of an underlying set $X$ and there is a set $Y$ with a partial relation 
$\leq$ between elements of $X$ and $Y$. Then a \emph{Berge copy of $\cF$} is obtained by replacing every element $x$ of $\cF$ by an element of $y\in Y$ with $x\le y$ in such a way that we replace the elements of $X$ with distinct elements of $Y$. In this paper, $X$ is the system of $k$-subsets of a finite set, $Y$ consists of subsets of size at least $k$ of a potentially larger underlying finite set, and the relation $\leq$ corresponds to inclusion. To give another example, Anstee and Salazar \cite{anssal} considered 0-1 matrices with the relation being ``larger or equal in every entry''. 

There are many other settings where defining Berge copies makes sense, including permutations, vertex ordered graphs, edge ordered graphs, Boolean functions, et cetera.
Lemma~\ref{cel2} can be used in all these settings to connect the problem of counting the elements of $Y$ in a Berge-$\cF$-free subset of $Y$ to the following problem: what is the maximum number of elements $y$ of $Y$ such that every $x\in X$ with $x\le y$ belongs to $S$, where $S$ is an $\cF$-free subset of $X$?

\end{document}